\documentclass[a4paper,12pt]{article}
\usepackage{amsmath,amsthm,amsfonts,shuffle}
\usepackage{fullpage,comment,graphicx,epstopdf}
\usepackage{multicol}

\input{xy}
\xyoption{all}
\xyoption{matrix}

\theoremstyle{plain}

\newtheorem{teo}{Theorem}
  \newtheorem{lem}[teo]{Lemma}
  \theoremstyle{remark}
  \newtheorem{rem}[teo]{Remark}
 
\theoremstyle{definition}
  \newtheorem{example}[teo]{Example}
  \theoremstyle{definition}
  \newtheorem{defn}[teo]{Definition}
    \newtheorem{defi}[teo]{Definition}

  \newtheorem{ex}[teo]{Example}
  \theoremstyle{plain}
  \newtheorem{prop}[teo]{Proposition}
  \theoremstyle{plain}
  
  \newtheorem{coro}[teo]{Corollary}

\def\t{\triangleleft}
\def\tt{\t^{-1}}

\def\wt{\widetilde}

\def\End{\mathrm{End}}

\def\id{\mathrm{Id}}
\def\F{\mathbb{F}}
\def\N{\mathbb{N}}

\def\Z{\mathbb{Z}}

\def\ra{\overline}
\def\To{\Rightarrow}

\def\m1{^{ \hbox{\small{-}}1}}
\def\Ur{U_{nc}^{\gamma}}
\def\Unc{U_{nc}}
\def\Un{U_{nc}}

\title{Link and knot invariants from non-abelian Yang-Baxter 
2-cocycles}

\author{Marco A. Farinati\thanks{Member of CONICET. Partially supported by
PIP 11220110100800CO, and UBACYT 20021030100481BA, mfarinat@dm.uba.ar.} 
\ and Juliana Garc\'ia Galofre\thanks{Partially supported by 
PIP 11220110100800CO and UBACYT 20021030100481BA,
jgarciag@dm.uba.ar }
}
\begin{document}
\maketitle
\begin{abstract}
We define a knot/link invariant using set theoretical solutions $(X,\sigma)$ of the Yang-Baxter equation
and non commutative 2-cocycles. We also define, for a given $(X,\sigma)$, a universal group $\Unc(X)$ governing all 2-cocycles in $X$, and we exhibit examples of computations.
\end{abstract}

\section*{Introduction and preliminaries}
The first part of this work consist of a generalization to biquandles and the notion of non-commutative 2-cocycle
given in \cite{AG} for quandles. It is also a generalization to the non-commutative case of part of the work
in \cite{CEGS} for commutative cocycles.
In this way, we obtain in principle new invariants for biquandles that do not come from quandles, admitting
non-commutative 2-cocycles, that is, whose universal group (see section 2 or 3) is non abelian.

In the second section we define a universal group governing all 2-cocycles for a given biquandle $X$, that is, a group
$\Unc(X)$ together with a 2-cocycle $\pi:X\times X\to \Unc(X)$ such that 
if $f:X\times X\to G$ is a non commutative
2-cocycles with values in a group $G$, then there is a unique group homomorphism
$\wt f:\Unc(X)\to G$ such that $f=\wt f\pi$. For instance, if $\Unc(X)$ is the trivial group, then every 2-cocycle is trivial. On the opposite,
if $\Unc(X)$ is nontrivial, this universal property says that it carries all information that any group could give using non commutative 2-cocycles.

In the third section, a reduced version of $\Unc$ is given, it depends on a map $\gamma:X\to \Unc(X)$. The
constructed group is called $\Ur(X)$, in particular it is a group and there is given a 2-cocycle $\pi_\gamma:X\times X\to \Ur(X)$
with the following property
(Theorem \ref{Ured}): if $f:X\times X\to G$ is a 2-cocycle, then there exists a cohomologous
(see definition \ref{cohomolo})
2-cocycle $f_\gamma:X\times X\to G$ and a group homomorphism $\wt f_\gamma$ such that $f_\gamma=
\wt f_\gamma\pi_\gamma$. Since the invariant defined in section 1 is unchanged for cohomologous cocycles
(Proposition \ref{cohomologous}), the invariant
produced with $f$ is the same as the one coming from $f_\gamma$, so we see that
all invariants are governed by the group $\Ur$, which is, in general,  smaller than $\Unc$.

In section 4 we exhibit some examples of computations. 
Most of them were performed using \cite{GAP}. We wish to heartily thank
Leandro Vendramin for teaching us the basic facts on syntax and GAP programing,
and helping us with our first (and not so firsts) steps implementing the algorithms
we needed. An interesting observation on examples is that, if $(X,\sigma)$
is a solution of the Yang-Baxter equation, then also is $ \ra\sigma:=\sigma^{-1}$, and if
$\sigma$ makes $X$ into a biquandle (see definition below), then also $\ra\sigma$ gives a biquandle structure. One may suspect that $\ra\sigma$
is, in a sense, equivalent to $\sigma$ and probably gives no new information, but this is not the case: one may have $\Ur(\sigma)=1$
(so that $\sigma$ gives always trivial invariants for any 
2-cocycle $f$)  but $\ra\sigma$ may give non trivial invariants.
Section 5 end with concluding remarks.

Before going into section 1 we recall the notion of biquandle and quandle:

\begin{defi}
A set theoretical solution of the Yang-Baxter equation is a pair
$(X,\sigma)$ where $\sigma:X\times X\to X\times X$ is a bijection satisfying
\[
( \id\times \sigma)( \sigma\times \id)( \id\times \sigma)
=( \sigma\times \id)( \id\times \sigma)( \sigma\times \id)
\]
Notation:  $\sigma(x,y)=(\sigma^1(x,y),\sigma^2(x,y))$ and
$\sigma^{-1}(x,y)=\overline{\sigma}(x,y)$.

A solution $(X,\sigma)$ is called
non-degenerated, or {\bf birack} if in addition: 
\begin{enumerate}
 \item
 ({\em left invertibility})
 for any $x,z\in X$ there exists a unique $y$ such that $\sigma^1\!(x,y)=z$, 
 \item ({\em right invertibility}) for any $y,t\in X$ there exists a unique $x$ such that $\sigma^2(x,y)=t$.
\end{enumerate}
A birack is called {\bf biquandle} if, given $x_0\in X$, there exists a unique $y_0\in X$ such that
$\sigma(x_0,y_0)=(x_0,y_0)$. In other words, if there exists a bijective map $s:X\to X$ such
that
\[
\{(x,y):\sigma(x,y)=(x,y)\}=
\{(x,s(x)): x\in X\}
\]
\end{defi}

\begin{defi}
 A {\bf rack} is a pair $(X,\t)$ where $\t:X\to X$ verifies
 \begin{enumerate}
  \item for all $x\in X$, the map $-\t x:X\to X$ ($y\mapsto y\t x$) is bijective, and
 \item  $(x\t y)\t z=(x\t z)\t (y\t z)$
 \end{enumerate}
A rack is called a {\bf quandle} if $x\t x=x$ for all $x\in X$.
\end{defi}
Biracks and biquandles generalizes racks and quandles respectively because a map $\sigma:X\times X\to X\times X$ of the form
$\sigma(x,y)=(y,f(x,y))$ is a birack if and only if the operation $\t$ given by  $x\t y:=f(x,y)$ is a rack, and $\sigma$ is a biquandle if and only
if this rack is a quandle. In section 2 we  review some examples of biquandles.

\section{Non-abelian 2-cocycles}
Let $(X,\sigma)$ be a biquandle
and H a (not necessarily abelian) group.

 \begin{defi} 
 \label{nc2}
 A function $f:X\times X\rightarrow H$
is a {\em braid non-commutative 2-cocycle} if 
\begin{itemize}
\item 
$ f\big(x_1,x_2\big)f\big(\sigma^2(x_1,x_2),x_3\big)
 =f\big(x_1,\sigma^1\!(x_2,x_3)\big)f\big(\sigma^2(x_1,\sigma^1\!(x_2,x_3)),\sigma^2(x_2,x_3)\big)
$, and
\item
$f\big(\sigma^1\! (x_1, x_2), \sigma^1\!(\sigma^2(x_1,x_2),x_3)\big)=f\big(x_2,x_3\big)$
\end{itemize} 
are satisfied for any $x_1, x_2,x_3 \in X$.
\end{defi}

\begin{defn}
 If $f$ further satisfies $f(x, s(x))= 1$ for all $x\in X$ then it is called of {\em type I}.
\end{defn}


\begin{rem}
If $f$ is a braided non commutative 2-cocycle and $\lambda:X\to H$ is
 an arbitrary function, then
\[f'(x,y)=\lambda(x)f(x,y)\lambda^{-1}(\sigma^2(x,y))\]
is also a braided non commutative 2-coycle. If moreover $f$ is of type I, and
$\lambda(x)=\lambda(s(x))$ for all $x\in X$,
then $f'$ is also of type I.
\end{rem}

\begin{defi}\label{cohomolo}
Two cocycles $f,f'$ are cohomologous ($f\sim f'$) if there is a function \\
$\gamma: X\rightarrow H$ such that 
$ \gamma(x)=\gamma(s(x))$
and
\[
f'(x,y)=\gamma(x)f(x,y)\gamma^{-1}(\sigma^2(x,y)) ,
\
\forall x,y\in X.
\]
\end{defi}

\begin{rem}It is easy to see that $\sim$ is an equivalence relation.
\end{rem}

An equivalence class is called a cohomology class. The {\em set} of cohomology classes is denoted by
$H^2_{NC}(X,H)$. This definitions, in case $(X,\t)$ is a quandle and considering
$\sigma(x,y)=(y,x\t y)$, agree with the ones in \cite{CEGS},
since in this case the second condition  of definition \ref{nc2}
is trivial. 
As in the rack/quandle case, if
$H$ is not commutative, $H^2_{NC}(X,H)$ need not
to be a group, it is just a set.

 \begin{rem} If $H$ happens to be commutative and
$f:X\times X\to H$ is a 2-cocycle in the non commutative sense, then $f$ is
necessarily a (special type of) 2-cocycle  with trivial coefficients in the
sense of \cite{CES}, but our definition is more restrictive, because we ask for a set of equations of 
the form $ab=a'b'$ and $c=c'$ (plus being type I), while in the usual abelian 2-cocycles the equation is of 
the form $abc=a'b'c'$ (plus being type I).
\end{rem}

\begin{rem}
The first condition  of definition \ref{nc2} is invariant under
inverting $\sigma$, namely, $f$ satisfies it for $\sigma$
if and only if $f$ does it for
$\sigma^{-1}$. On the other hand, the second condition is not
invariant under inverting $\sigma$. For example,
if $(X,\t)$ is a rack and $\sigma(x,y)=(y, x\t y)$,
then the second condition  is trivially satisfied for any function $f$ 
(and hence, this definition is equivalent, in this setting, to the one given in \cite{CEGS}),
while for $\ra\sigma(x,y)=(y\t^{-1} x,x)$ means that $f$ must be invariant under the action of the Inner group
associated to the rack $X$.
\end{rem}

\subsection{Weights}
Let $X$ be a biquandle, $H$ a group, $f:X\times X\to H$ a non-abelian 2-cocyle. Let $L=K_1\cup\dots\cup K_r$ be a classical oriented link
diagram on the plane, where $K_1,\dots , K_r$ 
are connected components, for some positive integer $r$. 
A {\em coloring} of $L$ by $X$ is a rule that assigns an element of $X$ to each semi-arc of $L$, in such a way that
for every  crossing
\[
\xymatrix{
x\ar@{->}[rd]|\hole&y\ar[ld]\\
z&t
}
\hskip 2cm 
\xymatrix{
a\ar@{->}[rd]&b\ar[ld]|\hole\\
c&d
}
\]
we have $(z,t)=\sigma(x,y)$ if the crossing is positive, and $(c,d)=\sigma^{-1}(a,b)$ if the crossing
is negative.

Let $\mathcal{C}\in Col_X(L)$ be a coloring of $L$ by $X$ and
 $(b_1,\dots, b_r)$ a set of base points on the components $(K_1,\dots, K_r)$.
Let $\tau^{(i)}$, for $i=1,\dots, r$ the set of crossings such that the under-arc is from the component $i$. 
Let $(\tau_1^{(i)},\dots,\tau_{k_{(i)}}^{(i)})$ be the crossings in  $\tau^{(i)}$, $i=1,\dots,r$ such that appear 
in this order when 
one travels $K_j$ in the given orientation.

At a positive crossing $\tau$, let $x_{\tau}, y_{\tau}$ be the color  on the incoming arcs. 
The {\it Boltzmann weight} at $\tau$ is 
$B_f(\tau, \mathcal{C})=f(x_{\tau}, y_{\tau})$.
At a negative crossing $\tau$, denote $\sigma(x_{\tau}, y_{\tau})$  the colors  on the incoming  arcs.
The {\it Boltzmann weight} at $\tau$ is 
$B_f(\tau, \mathcal{C})=f^{-1}(x_{\tau},y_{\tau})$
\[B_{f,\tau}=f(x_{\tau},y_{\tau}	):
\xymatrix@-2ex{
x_{\tau}\ar@{->}[rd]|\hole&y_{\tau}\ar[ld]\\
\sigma^1\!(x_{\tau}, y_{\tau})&\sigma^2(x_{\tau}y_{\tau})
}
\hskip 0.5cm
\xymatrix@-2ex{
\sigma^1\!(x_{\tau}, y _{\tau})\ar@{->}[rd]&\sigma^2(x_{\tau},y_{\tau})\ar[ld]|\hole\\
x_{\tau}&y_{\tau}
}:B_{f,\tau}=f(x_{\tau},y_{\tau})^{-1}
\]

We will show that a convenient product of these weights is invariant under Reidemeister moves.
 
\subsection{Reidemeister type I moves}
First notice that  $\sigma(x,s(x))=(x,s(x))$ implies $\sigma^{-1}(x,s(x))=(x,s(x))$, so,
adding any  orientation to the diagram
\vskip -1.5cm
\[    \xymatrix{
&    &\\
x \ar@{-} `d[r] `r[ru]|(.4)\hole `u[rl]_{s(x)}`l[rd]`d[dr] &&&x \ar@{-}@/_1pc/[rd]\\
& x&&&x
    }
  \]
   the condition $f^{\pm 1}(x, s(x))=1$ assures
that the factor due to this crossing
do not count.

\subsection{Reidemeister type II moves}
We consider several cases:
   \begin{itemize}

     \item[Case 1:]
\[\xymatrix{
 \sigma^1\!(x,y)\ar@{->}[dd]^{y}&\ar@{->}@/_7.7pc/[dd]_x|(.25)\hole|(.75)\hole \sigma^2(x,y)\\
 &\\
 \sigma^1\!(x,y)&\sigma^2(x,y)&&&&\\
 }
\xymatrix{
 \sigma^1\!(x,y)\ar@{->}[dd]&\ar@{->}[dd]\sigma^2(x,y)\\
 &\\
 \sigma^1\!(x,y)&\sigma^2(x,y)&&\\
 }
\]

    \item[Case 2:]

\[\xymatrix{
 x\ar@{<-}[dd]^{\sigma^1(y,x)}&\ar@{<-}@/_4pc/[dd]_{\sigma^2\!(y,x)}|(.25)\hole|(.75)\hole y\\
 &\\
 x&y&&&&\\
 }
\xymatrix{
 x\ar@{<-}[dd]&\ar@{<-}[dd] y\\
 &\\
 x&y&&\\
 }
\]

     \item[Case 3:]
 In this case and the following, start naming the top arcs of the diagrams on the left, the rest of the arcs are known as $X$ is a biquandle.
\[\xymatrix{
 y\ar@{->}[dd]^{\sigma^1\!(x,y)}&\ar@{<-}@/_5.7pc/[dd]_x|(.25)\hole|(.75)\hole \sigma^2(x,y)\\
 &\\
 y&\sigma^2(x,y)&&&&\\
 }
\xymatrix{
 y\ar@{->}[dd]&\ar@{<-}[dd]\sigma^2(x,y)\\
 &\\
y&\sigma^2(x,y)&&\\
 }
\]

 \item[Case 4:]

\[\xymatrix{
 \sigma^1\!(x,y)\ar@{<-}[dd]^{y}&\ar@{->}@/_5.7pc/[dd]_{\sigma^2(x,y)}|(.25)\hole|(.75)\hole x\\
 &\\
 \sigma^1\!(x,y)&x&&&&\\
 }
\xymatrix{
 \sigma^1\!(x,y)\ar@{<-}[dd]&\ar@{->}[dd]x\\
 &\\
\sigma^1\!(x,y)&x&&\\
 }
\]

\end{itemize}
The product of weights corresponding to the diagrams on the left in cases 1 and 3  is $f^{-1}(x,y)f(x,y)=1$, in cases 2 and 4 
 is $f(x,y)f^{-1}(x,y)=1$.

 \subsection{Reidemeister type III moves}
 
While there are eight oriented Reidemeister type III moves, only four of them are different.

\

{\em Case 1:}
Start by naming the incoming-arcs $x_1,x_2,x_3$. 
In case 1, as well as in the rest of the cases, once chosen three arcs in both diagrams the remaining
arcs are respectively equal as $\sigma$ is  a solution of YBeq.
\[
  \xymatrix{
  &x_2\ar@{->}[rdd]|\hole&\ar@{->}[ldd]x_3&\\
x_1\ar@{->}@/^2pc/[rrr]|(.35)\hole|(.58)\hole&&&{}^{\sigma^2(\sigma^2(x_1,x_2),x_3)}\\  
&{}^{\sigma^1\!(\!\sigma^1\!(\!x_1\!,x_2\!),\sigma^1\!(\!\sigma^2\!(\!x_1\!,x_2\!),x_3\!))}
&{}^{\sigma^2\!(\!\sigma^1\!(\!x_1\!,x_2\!)\!,\sigma^1\!(\sigma^2\!(\!x_1\!,x_2\!)\!,x_3\!))}&&&\\ } 
\]
\[\xymatrix{
  &x_2\ar@{->}[rdd]|\hole&\ar@{->}[ldd]x_3&\\
x_1\ar@{->}@/_2pc/[rrr]|(.3)\hole|(.48)\hole&&&{}^{\sigma^2(\sigma^2(x_1\!,\sigma^1\!\!(x_2,x_3)),\sigma^2(x_2,x_3))}\\  
&{}^{\sigma^1\!\!(x_1\!,\sigma^1\!\!(x_2,x_3))}&{}^{\sigma^1\!\!(\sigma^2(x_1\!,\sigma^1\!\!(x_2,x_3)),\sigma^2(x_2,x_3))}&\\ } 
\]
The product of the weights following the horizontal under-arc, in the first diagram, is: 
\[
I=f(x_1, x_2)f(\sigma^2(x_1,x_2),x_3)
\]
and in the second, is: 
\[
 II=f(x_1, \sigma^1\!(x_2,x_3))f(\sigma^2(x_1,\sigma^1\!(x_2,x_3)),\sigma^2(x_2,x_3))
\]

$I=II$ is one of the equalities defining 2-cocycle, the
other equation defining 2-cocycle affirms that the weights given to the {\em other} crossings are the same.
Notice that in the quandle coloring this condition is trivial, but in the biquandle coloring it is not.

 \
 
{\em Case 2:} 
 Start by naming the arcs $\sigma^2(x_1,x_2)$, $x_2$ and $\sigma^1\!(x_2,x_3)$ in both diagrams. 
 The remaining arcs are known using  
 the  fact that $X$ is a biquandle
 and \\$\sigma^1\left(\sigma^1(x_1,x_2),\sigma^1(\sigma^2(x_1,x_2),x_3)\right)
 =\sigma^1(x_1,\sigma^1(x_2,x_3))$ (due to the braid equation).

\[
  \xymatrix{
  &x_2\ar@{<-}[rdd]|\hole&\ar@{->}[ldd]\sigma^1(x_2,x_3)&\\
\sigma^2(x_1,x_2)\ar@{->}@/^2pc/[rrr]|(.35)\hole|(.55)\hole&&&{}^{\sigma^2(x_1,\sigma^1(x_2,x_3))}\\  
&{}^{\sigma^1(\sigma^2(x_1,x_2),x_3)}&{}^{\sigma^2(\sigma^1(x_1,x_2),\sigma^1(\sigma^2(x_1,x_2),x_3)\!)}&&&&\\ } 
\]
\[
\xymatrix{
  &x_2\ar@{<-}[rdd]|\hole&\ar@{->}[ldd]\sigma^1(x_2,x_3)&\\
\sigma^2(x_1,x_2)\ar@{->}@/_2pc/[rrr]|(.35)\hole|(.55)\hole&&&{}^{\sigma^2(x_1,\sigma^1(x_2,x_3))}\\  
&{}^{\sigma^1(\sigma^2(x_1,x_2),x_3)}&{}^{\sigma^1(\sigma^2(x_1,\sigma^1(x_2,x_3),\sigma^2(x_2,x_3)\!)\!)}&\\ } 
\]

 The product of weights for the horizontal line in the first diagram is \[I=f^{-1}(x_1,x_2)f(x_1,\sigma^1\!(x_2,x_3))\]
 and for the second 
 diagram is 
 \[
 II=f(\sigma^2(x_1,x_2),x_3)f^{-1}(\sigma^2(x_1,\sigma^1\!(x_2,x_3)),\sigma^2(x_2,x_3))
 \]
 The remaining weights in both diagrams are  
 $a=f^{-1}(\sigma^1\!(x_1,x_2),\sigma^1\!(\sigma^2(x_1,x_2),x_3))$
 and 
$b=f^{-1}(x_2,x_3)$. 
As $f$ is a 2-cocycle, $a=b$. 
 
 \
 
 {\em Case 3:} Name the incoming arcs by $a,b$ and $c$. 
 
 \begin{rem} YBeq is equivalent to the following equation, which explains the equality of the out-coming arcs in both diagrams.
\begin{equation}\label{trenza-inversa}
   (\sigma\times 1)(1\times\overline{\sigma})(\overline{\sigma}\times1)=(1\times\overline{\sigma})(\overline{\sigma}\times1)(1\times\sigma)
\end{equation}

 \end{rem}

 \[
  \xymatrix{
  &c\ar@{->}[rdd]|\hole&\ar@{<-}[ldd]{}^{\overline{\sigma}^2(\overline{\sigma}^{2}(a,\sigma^1\!(b,c)),\sigma^2(b,c))}&\\
b\ar@{->}@/^2pc/[rrr]|(.235 )\hole|(.42)\hole&&&{}^{\overline{\sigma}^1(\overline{\sigma}^2(a,\sigma^1\!(b,c)),\sigma^2(b,c))}\\  
&a&{}^{\overline{\sigma}^1(a,\sigma^1\!(b,c))}&&&&\\ 
}
\]
\[
\xymatrix{
  &c\ar@{->}[rdd]|\hole&\ar@{<-}[ldd]{}^{\overline{\sigma}^2(\overline{\sigma}^2(a,b),c)}&\\
b\ar@{->}@/_2pc/[rrr]|(.23)\hole|(.42)\hole&&&{}^{\sigma^2(\overline{\sigma}^1(a,b),\overline{\sigma}^1(\overline{\sigma}^2(a,b),c))}\\  
&a&{}^{\sigma^1\!(\overline{\sigma}^1(a,b),\overline{\sigma}^1(\overline{\sigma}^2(a,b),c)}&\\ } 
\]

 
 The product of weights for the horizontal line in the first diagram is
 \[I=
 f(b,c)f^{-1}\left(\overline{\sigma}(\overline{\sigma}^2 (a,\sigma^1\!(b,c)),
 \sigma^2(b,c))\right)
\] and for the second diagram is 
\[II= f^{-1}(\overline{\sigma}(a,b))f\left(\overline{\sigma}^1(a,b),\overline{\sigma}^1(\overline{\sigma}^2(a,b),c)\right)                                                                    
 .\]
 Using (\ref{trenza-inversa}) in I:
 \[
 I=f(b,c) f\left((\sigma^2(\overline{\sigma}^1(a,b),\overline{\sigma}^1(\overline{\sigma}^2(a,b),c)),
 \overline{\sigma}^2(\overline{\sigma}^2(a,b),c))\right)
 \]
  Take te changes of variables  
   $(x_1,d)= \overline{\sigma}(a,b)$ and $(x_2,x_3)=\overline{\sigma}(d,c)$. Then
   \[
    I=f(\sigma^2(x_1,\sigma^1(x_2,x_3)),\sigma^2(x_2,x_3))f^{-1}(\sigma^2(x_1, x_2),x_3)
   \]
\[
 II=f^{-1}(x_1,\sigma^1(x_2,x_3))f(x_1,x_2)
\]
We see that if  $f$ is a non-commutative 2 cocycle then $I=II$.
  
  The weights that correspond to the other crossings are: 
  \[
   III=f^{-1}(\overline{\sigma}(a, \sigma^1(b,c)))
,  \
     IV=f^{-1}(\overline{\sigma}(\overline{\sigma}^2(a,b),c)),
    \]
changing variables and composing (\ref{trenza-inversa}) with $1\times \delta$:

\[
 III=f^{-1}(\sigma^1(x_1,x_2),\sigma^1(\sigma^2(x_1,x_2),x_3))
,\
 IV=f^{-1}(x_2,x_3)
\]

$III=IV$ is verified as $f$ is a 2-cocycle.

 \
 
 {\em Case 4:} We only exhibit the diagram corresponding to this case, the computations are similar to the previous case.

\[
  \xymatrix{
  &\ar@{<-}[rdd]|\hole&\ar@{<-}[ldd]&\\
\ar@{->}@/^1.5pc/[rrr]|(.4)\hole|(.6)\hole&&&\\  
&&&&&&\\ } 
\xymatrix{
  &\ar@{<-}[rdd]|\hole&\ar@{<-}[ldd]&\\
\ar@{->}@/_1.5pc/[rrr]|(.4)\hole|(.6)\hole&&&\\  
&&&\\ } 
\]

 This shows, not only, that the product of the weights does not change  under Reidemeister moves 
 but the remaining weights stay the same.

For a group element $h\in H$, denote $[h]$ denote the conjugacy class to which $h$ belongs.
\begin{defn}
The set of conjugacy classes  
 \[
  \overrightarrow{\Psi}(L,f)= \overrightarrow{\Psi}_{(X,f)}(L)
  = \{[\Psi_i(L,\mathcal{C},f )]\}_{\underset{\mathcal{C}\in Col_X(L)}{1\leq i\leq r}}
 \]
where  $
        \Psi_i(L,\mathcal{C},f)=\prod^{k(i)}_{j=1}B_f(\tau^{(i)}_j, \mathcal{C})
       $
(the order in this product is following the orientation of the component)
is called the   {\em conjugacy biquandle cocycle invariant} of the link.   
\end{defn}
\begin{teo}
The   conjugacy biquandle cocycle invariant    $\Psi$ is well defined.
\end{teo}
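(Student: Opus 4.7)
The plan is to disentangle three independent issues: independence of the choice of base points, a bijection of colorings between Reidemeister-equivalent diagrams, and local invariance of Boltzmann weight products under each Reidemeister move. All three pieces have to be assembled in order to glue the local computations of the preceding subsections into a global link invariant.

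First I would show that $[\Psi_i(L,\mathcal{C},f)]$ does not depend on the base point $b_i$: moving $b_i$ along $K_i$ past the first under-crossing $\tau_1^{(i)}$ replaces the ordered product $B_f(\tau_1^{(i)},\mathcal{C})B_f(\tau_2^{(i)},\mathcal{C})\cdots B_f(\tau_{k(i)}^{(i)},\mathcal{C})$ by a cyclic shift, and any two cyclic shifts $a_1\cdots a_n$ and $a_k\cdots a_n a_1\cdots a_{k-1}$ of a word in $H$ are conjugate. Hence taking the conjugacy class kills the ambiguity in the base point, so $\overrightarrow{\Psi}(L,f)$ is well-defined once a diagram and a coloring are fixed.

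Next I would address invariance under Reidemeister moves. For each move I would first establish a bijection $Col_X(L)\to Col_X(L')$ between colorings of the two diagrams: this is a local statement inside the disk where the move occurs and follows from the birack axioms (which give unique extensions of colors along each strand through a crossing in either direction), together with the biquandle axiom (which supplies the unique fixed color $s(x)$ for an R-I kink). Under this bijection, the weight calculations already carried out give what is needed: for R-I the kink contributes $f(x,s(x))^{\pm1}=1$ by type I; for R-II in each of the four orientation cases the two new under-crossings on a single component contribute $f(x,y)f(x,y)^{-1}=1$ consecutively; and for R-III in each of the four cases the two cocycle identities of Definition \ref{nc2}, combined with the changes of variables in the paper, yield equal partial products for each of the three involved components.

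The most delicate step is R-III, because the three components involved each have their sequence of under-crossings reordered by the move rather than merely extended by a $gg^{-1}$ cancellation as in R-II. The key point is that once one identifies the six outgoing arcs on the two sides of the move (which agree by the braid equation, respectively by its inverse form \ref{trenza-inversa} for the mixed-orientation cases) and the three incoming arcs, the first cocycle equation of Definition \ref{nc2} matches the two weights along the component that passes under twice in succession, while the second cocycle equation matches the single under-crossing weights along each of the other two components. Together with the independence from base points, this yields $\overrightarrow{\Psi}(L,f)=\overrightarrow{\Psi}(L',f)$ for any two diagrams related by a Reidemeister move, and Reidemeister's theorem then gives the result.
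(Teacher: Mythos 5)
Your proposal is correct and follows essentially the same route as the paper: the authors likewise reduce the theorem to the Reidemeister-move weight computations of the preceding subsections plus the observation that changing base points only cyclically permutes the Boltzmann weights, which is absorbed by passing to conjugacy classes. Your additional explicit remarks (the coloring bijection induced by each move, and the role of the two cocycle identities in the R-III cases) merely flesh out steps the paper leaves implicit.
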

\begin{proof}
 The fact that $\Psi$ does not change under Reidemeister moves for fixed base points was proven earlier. A change of base points
 causes cyclic permutations of Boltzmann weights, and hence the invariant is defined up to conjugacy.
\end{proof}

 \begin{prop}\label{cohomologous}
 If  $f, g$ are two  cohomologous non-commutative 2-cocyle  functions then  $[\Psi_i(L,\mathcal{C},f)]
 =[\Psi_i(L,\mathcal{C},g)]$.
  \end{prop}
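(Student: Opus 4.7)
The natural approach is a telescoping argument. The first step is to compute the effect of the cohomologous change on the Boltzmann weight at a single crossing. At a positive crossing $\tau$ with incoming under-arc color $x_\tau$ and over-arc color $y_\tau$, the identity $g(x,y)=\gamma(x)f(x,y)\gamma^{-1}(\sigma^2(x,y))$ gives directly
$$B_g(\tau,\mathcal{C})=\gamma(x_\tau)\,B_f(\tau,\mathcal{C})\,\gamma^{-1}(\sigma^2(x_\tau,y_\tau)).$$
At a negative crossing the incoming arcs are $\sigma(x_\tau,y_\tau)$, and inverting the cohomologous relation gives $B_g(\tau,\mathcal{C})=\gamma(\sigma^2(x_\tau,y_\tau))\,B_f(\tau,\mathcal{C})\,\gamma^{-1}(x_\tau)$. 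In both cases, writing $u_\tau$ and $u'_\tau$ for the colors on $K_i$'s under-arc immediately before and after $\tau$, respectively, the formula takes the uniform shape $B_g(\tau,\mathcal{C})=\gamma(u_\tau)\,B_f(\tau,\mathcal{C})\,\gamma^{-1}(u'_\tau)$.

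Substituting into the definition of $\Psi_i$ gives
$$\Psi_i(L,\mathcal{C},g)=\prod_{j=1}^{k(i)}\gamma(u_j)\,B_f(\tau_j^{(i)},\mathcal{C})\,\gamma^{-1}(u'_j).$$
The plan is then to telescope: as we traverse $K_i$ in the given orientation, the outgoing under-arc semi-arc at $\tau_j^{(i)}$ is the same semi-arc as the incoming under-arc at $\tau_{j+1}^{(i)}$, so the interior factors $\gamma^{-1}(u'_j)\gamma(u_{j+1})$ collapse to $1$ and only the bookends $\gamma(u_1)$ and $\gamma^{-1}(u'_{k(i)})$ survive. Since $K_i$ is a closed loop, traversing it once identifies the final under-arc semi-arc with the initial one at the base point $b_i$, so $u_1=u'_{k(i)}$. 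Setting $h=\gamma(u_1)$ yields
$$\Psi_i(L,\mathcal{C},g)=h\,\Psi_i(L,\mathcal{C},f)\,h^{-1},$$
and the two conjugacy classes $[\Psi_i(L,\mathcal{C},f)]$ and $[\Psi_i(L,\mathcal{C},g)]$ coincide.

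The main obstacle I expect is justifying the inner cancellations. For quandles (where $\sigma^1(x,y)=y$) the over-arc color is preserved at every crossing, so consecutive under-arc semi-arcs of $K_i$ share the same color and $u'_j=u_{j+1}$ holds verbatim. For a genuine biquandle the over-arc color also changes, so between two consecutive under-crossings $\tau_j^{(i)}$ and $\tau_{j+1}^{(i)}$ the color of $K_i$ may be modified at intermediate over-crossings where $K_i$ acts as over-arc. One must either verify that $\gamma$ is invariant under these intermediate $\sigma^1$-type color changes, or argue that any extra $\gamma$-factors produced can be absorbed into the global conjugation; this is the technical crux of the telescoping step, after which the proof is routine.
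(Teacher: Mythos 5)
Your per-crossing formula and the telescoping strategy are exactly the paper's own proof: the four displayed cases in the published argument are precisely the sign analysis behind your uniform identity $B_g(\tau,\mathcal{C})=\gamma(u_\tau)B_f(\tau,\mathcal{C})\gamma^{-1}(u'_\tau)$, worked out for the four combinations of signs of two adjacent under-crossings, and the conjugacy class at the end comes from closing the loop at the base point, as you say. So in method there is nothing different.

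However, the issue you flag in your last paragraph is a genuine gap, and you leave it open rather than closing it. The hypothesis $\gamma(x)=\gamma(s(x))$ does not imply $\gamma(y)=\gamma(\sigma^1(x,y))$, so if $K_i$ passes \emph{over} another strand between two consecutive under-crossings $\tau_j^{(i)}$ and $\tau_{j+1}^{(i)}$, its colour changes there by a $\sigma^1$-type move, $u'_j\neq u_{j+1}$ in general, and the interior factor $\gamma^{-1}(u'_j)\gamma(u_{j+1})$ does not cancel; it also cannot be ``absorbed into the global conjugation'', since it sits strictly between two Boltzmann factors rather than at the ends of the product. Be aware that the paper does not close this gap either: all four of its diagrams show two under-crossings adjacent along the under-strand with no intermediate over-crossings, so the published computation really only covers the quandle-type situation where $\sigma^1(x,y)=y$ and hence $u'_j=u_{j+1}$ automatically. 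For a genuine biquandle the telescoping goes through exactly when one strengthens the hypothesis on $\gamma$ to $\gamma(\sigma^1(x,y))=\gamma(y)$ for all $x,y$; note that this is also the condition needed for the coboundary operation to preserve the second cocycle condition of Definition \ref{nc2}, and it implies $\gamma(x)=\gamma(s(x))$ because $\sigma^1(x,s(x))=x$. Identifying and imposing that condition is what is missing from your argument (and, implicitly, from the paper's).
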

\begin{proof} Let us suppose $f(x_1,x_2)=\gamma(x_1)g(x_1,x_2)\gamma^{-1}(\sigma^2(x_1,x_2))$. There are four cases to analyse: 

\[   (1)
  \xymatrix{
& x_2 \ar@{->}[dd] & x_3 \ar@{->}[dd]&\\
x_1\ar@{->}[rrr]|(0.253)\hole|(0.6895)\hole^{\sigma^2(x_1,x_2)}&&&\\
&\sigma^1(x_1,x_2)&\sigma^1\!(\sigma^2\!(x_1,\!x_2),\!x_3)&
 }
   (2) \xymatrix{
& \overline{\sigma}^2(x_1,x_2) \ar@{<-}[dd]&& x_3 \ar@{->}[dd]&\\
x_2\ar@{->}[rrrr]|(0.33)\hole|(0.815)\hole^{\ra\sigma^1(x_1,x_2)}&&&&\\
&x_1&&&
 }\]
 \[
(3)
\xymatrix{
&  \ar@{->}[dd] &&  \ar@{<-}[dd]&\\
\ar@{->}[rrrr]|(.26)\hole|(.74)\hole^{x_3}&&&&\\
&x_1&&x_2&
 }
 (4) \xymatrix{
& x_3 \ar@{<-}[dd] && \ra\sigma^2(x_1,x_2) \ar@{<-}[dd]&\\
\ar@{->}[rrrr]|(.17)\hole|(.62)\hole^{x_2}&&&&\ra\sigma^1(x_1,x_2)\\
&&&x_1&
 }\]
In case 1),                   
the product of weights for the horizontal line is:
$
 f(x_1,x_2)f(\sigma^2(x_1,x_2),x_3)=
 $
 \[=\gamma(x_1)g(x_1,x_2)\gamma^{-1}(\sigma^2(x_1,x_2))
 \gamma(\sigma^2(x_1,x_2)g(\sigma^2(x_1,x_2),x_3)\gamma^{-1}(\sigma^2(\sigma^2(x_1,x_2),x_3))
\]
In case 2):
 $
  f^{-1}(\overline{\sigma}(x_1,x_2))f(\overline{\sigma}^1(x_1,x_2),x_3)=
 $
  \[
   \gamma(\sigma^2(\overline{\sigma}(x_1,x_2))) g^{-1}(\overline{\sigma}(x_1,x_2))
   \gamma^{-1}(\overline{\sigma}^{1}(x_1,x_2))
   \gamma(\ra\sigma^{1}(x_1,x_2))
   g(\overline{\sigma}^1(x_1,x_2),x_3)\gamma^{-1}(\sigma^2(\overline{\sigma}^1(x_1,x_2),x_3))
  \]
In case 3):
 $
  f(\ra\sigma(x_1,x_3))f^{-1}(\ra\sigma(x_2,x_3))=
 $
\[
 \gamma(\ra\sigma^1(x_1,x_2))g(\ra\sigma(x_1,x_3))\gamma^{-1}(\sigma^2(\ra\sigma(x_1,x_3)))\gamma(\sigma^2(\ra\sigma(x_2,x_3)))
 g(\ra\sigma(x_2,x_3))\gamma^{-1}(\ra\sigma(x_2,x_3))
\]
And finally 
in case 4):
 $
  f^{-1}(x_2,x_3)f^{-1}(\overline{\sigma}(x_1,x_2))=
 $
 \[
  \gamma(\sigma^2(x_2,x_3))g^{-1}(x_2,x_3)\gamma^{-1}(x_2)  
  \gamma(\sigma^2(\overline{\sigma}(x_1,x_2)))g^{-1}(\overline{\sigma}(x_1,x_2))\gamma^{-1}(\overline{\sigma}^1(x_1,x_2))
 \]

 \end{proof}

\section{Universal non commutative 2-cocycle}

Given a biquandle $(X,\sigma)$ and a group $H$, recall a non commutative 2-cocycle is a function
$f:X\times X\to H$ satisfying
\[
 f\big(x,y\big)f\big(\sigma^2(x,y),z\big)=f\big(x,\sigma^1\!(y,z)\big)f\big(\sigma^2(x,\sigma^1\!(y,z)),\sigma^2(y,z)\big)
\]
and
 \[
     f\big(\sigma^1\! (x, y), \sigma^1\!(\sigma^2(x,y),z)\big)=f\big(y,z\big)
    \]
for any $x,y,z \in X$, and is called type I if in addition
$f(x,s(x))=1$.

\begin{defi}\label{def:unc}
We define $U_{nc}=U_{nc}(X,\sigma)$, the Universal biquandle 2-cocycle group, as the group freely generated by symbols
$(x,y)\in X\times X$ with relations

\begin{enumerate}
\item[(Unc1)] $
(x,y)(\sigma^2(x,y),z)=(x,\sigma^1(y,z))(\sigma^2(x,\sigma^1(y,z)),\sigma^2(y,z))
$
\item[(Unc2)] $
(\sigma^1 (x, y), \sigma^1(\sigma^2(x,y),z))=(y,z)
    $
\item[(Unc3)]
$
 (x,s(x))=1
$
\end{enumerate}
\end{defi}

The following is immediate from the definitions:
\begin{prop}\label{universal property}
Let $X$ be a biquandle:
\begin{itemize}
\item Denote $[x,y]$ the class
of $(x,y) $ in $U_{nc}$. The map
\[
\begin{array}{rcl}
\pi\colon X\times X&\to& U_{nc}
\\
(x,y)&\mapsto &[x,y]
\end{array}
\]
is a type I non commutative 2-cocycle.

\item 
Let $H$ be  a group and
 $f:X\times X\to H$ a type I non commutative 2-cocycle, then there exists a unique group homomorphism
$\ra f:U_{nc}\to H$ such that
$f=\ra f\pi$.
 \[
 \xymatrix{
 X\times X\ar[d]_{\pi}\ar[r]^f&H\\
 U_{nc}\ar@{-->}[ru]_{\exists !\ \ra f}
 }\]
       \end{itemize}
\end{prop}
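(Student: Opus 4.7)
The plan is to treat this as a routine ``universal property from generators and relations'' argument, once we observe that Definition \ref{def:unc} is engineered precisely so that the three cocycle conditions become defining relations.

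For the first bullet, I would verify each of the three conditions for $\pi$ to be a type I non-commutative 2-cocycle directly from the relations. Relation (Unc1) gives
\[
[x,y]\,[\sigma^2(x,y),z] = [x,\sigma^1(y,z)]\,[\sigma^2(x,\sigma^1(y,z)),\sigma^2(y,z)],
\]
which is exactly the first equation of Definition \ref{nc2} applied to $\pi$. Relation (Unc2) is the second equation of Definition \ref{nc2} applied to $\pi$. Relation (Unc3) says $[x,s(x)]=1$, which is the type I condition. So $\pi$ is a type I non-commutative 2-cocycle by construction.

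For the second bullet, I would proceed as in the standard proof for groups presented by generators and relations. Let $F$ be the free group on the set of symbols $\{(x,y):x,y\in X\}$. The function $f:X\times X\to H$ extends uniquely to a group homomorphism $\widehat{f}:F\to H$ sending $(x,y)\mapsto f(x,y)$. Since $f$ satisfies the two equations in Definition \ref{nc2} and is of type I, the images under $\widehat{f}$ of the words
\[
(x,y)(\sigma^2(x,y),z)\cdot\bigl((x,\sigma^1(y,z))(\sigma^2(x,\sigma^1(y,z)),\sigma^2(y,z))\bigr)^{-1},
\]
\[
(\sigma^1(x,y),\sigma^1(\sigma^2(x,y),z))(y,z)^{-1},\qquad (x,s(x))
\]
are all trivial in $H$. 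Hence the normal subgroup $N\trianglelefteq F$ generated by the relators (Unc1)--(Unc3) lies in $\ker\widehat{f}$, so $\widehat{f}$ descends to the quotient $U_{nc}=F/N$, yielding a homomorphism $\overline{f}:U_{nc}\to H$ with $\overline{f}([x,y])=f(x,y)$, i.e.\ $f=\overline{f}\circ\pi$.

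For uniqueness, note that the classes $[x,y]$ generate $U_{nc}$ as a group; any homomorphism $\varphi:U_{nc}\to H$ with $\varphi\circ\pi=f$ is forced to satisfy $\varphi([x,y])=f(x,y)$, which determines $\varphi$ on all generators and hence on all of $U_{nc}$. There is essentially no obstacle in the proof: the only point requiring any care is the bookkeeping that the relators of $U_{nc}$ literally coincide with the two 2-cocycle equations and the type I condition, so $\widehat{f}$ factors through $U_{nc}$.
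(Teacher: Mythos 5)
Your proof is correct and matches the paper's intent exactly: the paper simply declares the proposition ``immediate from the definitions,'' and your generators-and-relations argument is precisely the routine verification being left implicit. Nothing is missing and nothing is done differently.
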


In particular, given $(X,\sigma)$, there exists non trivial 
2-cocycles if and only if $U_{nc}$ is a nontrivial group.

\begin{prop} $U_{nc}$ is functorial. That is, 
 if $\phi:(X,\sigma)\to (Y,\tau)$ is a morphism of set theoretical solutions of the YBeq, namely $\phi$
satisfy
\[
(\phi\times\phi)\sigma(x,x')=
\tau(\phi x,\phi x')
\]
then, $\phi$ induces a (unique) group homomorphism
$U_{nc}(X)\to U_{nc}(Y)$
satisfying
\[
[x,x']\mapsto
[\phi x,\phi x']
\]
\end{prop}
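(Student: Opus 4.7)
The plan is to invoke the universal property of Proposition~\ref{universal property}. Let $\pi_Y\colon Y\times Y\to\Unc(Y)$ denote the canonical type~I cocycle of $(Y,\tau)$, and set
\[
f:=\pi_Y\circ(\phi\times\phi)\colon X\times X\to\Unc(Y),\qquad f(x,x')=[\phi x,\phi x']_Y.
\]
Once $f$ is shown to be a type~I non-commutative 2-cocycle relative to $(X,\sigma)$, the universal property of $\Unc(X)$ furnishes a unique group homomorphism $\ra f\colon\Unc(X)\to\Unc(Y)$ with $\ra f\circ\pi_X=f$, that is, $\ra f[x,x']=[\phi x,\phi x']$, which is precisely the assertion.

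To verify the two braid-cocycle conditions, rewrite the hypothesis $(\phi\times\phi)\sigma=\tau(\phi\times\phi)$ componentwise as $\phi\sigma^i(x,y)=\tau^i(\phi x,\phi y)$ for $i=1,2$. Substituting into the first cocycle equation for $f$ and pulling $\phi$ inside each slot produces an identity in $\Unc(Y)$ which is literally relation (Unc1) for $(Y,\tau)$ applied to the triple $\phi x,\phi y,\phi z$. The second cocycle identity for $f$ collapses in the same fashion to (Unc2) in $\Unc(Y)$.

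For the type~I condition we need the auxiliary compatibility $\phi\circ s_X=s_Y\circ\phi$ between the biquandle sections. Applying the morphism identity to the pair $(x,s_X(x))$ gives
\[
\tau(\phi x,\phi s_X(x))=(\phi\times\phi)\sigma(x,s_X(x))=(\phi x,\phi s_X(x)),
\]
so the uniqueness clause in the definition of biquandle forces $\phi s_X(x)=s_Y(\phi x)$. Hence $f(x,s_X(x))=[\phi x,s_Y(\phi x)]_Y=1$ by (Unc3) in $\Unc(Y)$, completing the verification. The only step that is not pure symbol-pushing is this compatibility of the biquandle sections; everything else is a direct translation of the relations presenting $\Unc(X)$ into the corresponding relations of $\Unc(Y)$, and uniqueness of $\ra f$ is immediate from the universal property (equivalently, from the fact that the classes $[x,x']$ generate $\Unc(X)$).
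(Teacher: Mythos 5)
Your proof is correct and is essentially the paper's argument in universal-property clothing: the paper verifies directly that the assignment $(x,x')\mapsto(\phi x,\phi x')$ respects the defining relations of $\Unc(X)$, which is exactly your check that $\pi_Y\circ(\phi\times\phi)$ is a type I 2-cocycle on $(X,\sigma)$. The one place you are more careful than the paper is relation (Unc3): the compatibility $\phi\circ s_X=s_Y\circ\phi$, which you correctly deduce from the uniqueness of fixed points in the biquandle definition, is needed there and is left implicit in the paper's ``that is clear.''
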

\begin{proof}
One need to prove that the assignment $(x,x')\mapsto
(\phi x,\phi x')$
is compatible with the relations defining $U_{nc}(X)$ and $U_{nc}(Y)$ respectively,
and that is clear since $(\phi\times\phi)\circ\sigma=\tau\circ (\phi\times\phi)$.
\end{proof}

\begin{rem}
 In order to produce an invariant of a knot or link, given a solution $(X,\sigma)$, we need to produce a coloring
 of the knot/link by $X$, and then find a non commutative 2-cocycle, but since $U_{nc}$ is functorial,
 given $X$ we always have the universal 2-cocycle $X\times X\to U_{nc}(X)$, and hence, we only need to consider all different colorings.
 
 Also, if $\phi:X\to X$ is a bijection commuting with $\sigma$, then, given a coloring and its invariant calculated
 with the universal cocycle, we may apply $\phi$
 to each color and get another coloring, and this will produce the same invariant pushed by $\phi$ in $U_{nc}$.
 
\end{rem}

\begin{rem}Given a link $L$ of two strands colored using (both) colors $\{1,2\}$, the invariant
obtained is 
$\Psi_i(L,\mathcal{C},f)=(i,j)^{ln(i,j)}$ 
$i,j\in \{1,2\}$ and $i\neq j$, where $ln(i,j)$ is the linking number between the two strands.
\end{rem}

\begin{proof}
First notice that every component must be colored by a single  color. 



To every underarc of the component $i$ with itself will correspond a $(i,i)=1$ as weight. Then these crossings will not change
the product. Then one can think that each component is unknoted with itself. 
It is well known that any two closed curves in space, if allowed to pass through themselves but not each other, 
can be moved a concatenation  of the following standard positions:
\[
\xymatrix{
j\ar[rd]&i\ar[rd]&j\\
 i\ar[ru]|\hole&j\ar[ru]|\hole&i
}
\]
This diagram will contribute a factor $(i,j)^{\bf 1}$ to $\Psi_i$ ($(j,i)$ to $\Psi_j$)
and if trying to calculate the linking number will 
add {\bf 1} for each pair of crossings. Analogously in the next diagram:
\[
\xymatrix{
i\ar[rd]|\hole&j\ar[rd]|\hole&i\\
j\ar[ru]&i\ar[ru]&j
}
\]
so, the invariant $\Psi_i$ will be
$(i,j)^{\frac{a-b}{2}}=(i,j)^{ln(i,j)}$ where $a, b$ are the total amount of positive and negative crossings
\end{proof}

\begin{example}
 The Whitehead link have linking number cero, the same happens taking the link consisting of two unknots. 
 If you paint these links using $Wada(\Z_3)$
  (see example below), Whitehead
 has only 3 possibilities, while there are 9 ways to paint the pair of unknots.
 \end{example}

\subsection{Some examples of biquandles of small cardinality}

We first list some well-known general constructions generating biquandle solutions:

\begin{enumerate}
\item 
If $(X,\t)$ is a rack, one may consider two different solutions of the YBeq:
\[
\sigma(x,y)=(y,x\t y),\ \hbox{ and }\
\ra\sigma(x,y):=\sigma^{-1}(x,y)=(y\tt x, x)
\]
these solutions are biquandles if and only if $(X,\t)$ is a quandle, namely
$x\t x=x$ for all $x\in X$, in this case the function $s$ is the identity: $s(x)=x$.

When considering n.c. 2-cocycles, 
condition Unc2 is not preserved 
(in general) if one changes $\sigma$ with $\sigma^{-1}$, so it is relevant
to see $ \sigma$ and $\sigma^{-1}$ as different biquandles.

\item Let  $\tau:X\times X\to X\times X$ denote the flip,
namely 
$\tau(x,y)=(y,x)$.
Let  $\mu,\nu:X\to X$  be two bijections of $X$. then
 \[
 (\mu\times \nu) \tau(x,y)
 =(\mu(y),\nu(x)) 
 \]
satisfies YBeq if and only if $\mu\nu=\nu\mu$, 
and this solution is a biquandle
if and only if $\nu=\mu^{-1}$, in this case, the function $s:X\to X$ is equal to  $\mu^{-1}$.
In this way, the set of bijections of $X$ maps injectively into the set of biquandle structures on $X$, 
each conjugacy class of a given bijection  maps into an 
isomorphism class of biquandle structures. Notice that every biquandle structure obtained in this
way is involutive, namely $\sigma= (\mu\times \mu^{-1}) \circ \tau$ verifies $ \sigma^2=\id$.

\item Wada: if $G$ is a group, then the formula $ \sigma(x,y)=(xy^{-1}x^{-1},xy^2)
$
is a biquandle, with $s(x)=x^{-1}$.
As a particular case, if $G$ is abelian and with additive notation we have
$ \sigma(x,y)=(-y,x+2y)
$.
\item Alexander biquandle or Alexander switch:

Let $R$ be a ring, $s,t\in R$ two commuting units, and $M$ an $R$-module, then
\[
 \sigma(x,y)=(s\cdot y,t\cdot  x+(1-st)\cdot y) ,\ \ (x,y)\in M\times M
 \]
is a biquandle, with function $s(x)=(s^{-1})\cdot x$.
In the particular case $s=-1$, $t=1$ one gets the abelian 
Wada's solution. If $s=1$ then one gets the solution induced by the Alexander rack.
 
\end{enumerate}

These constructions give a lot of examples, but there are much more.
If $|X|=2$, call $X=\{0,1\}$, one have the flip, satisfying $s(1)=1$ and $s(2)=2$, and this condition
fully characterize this solution. If $s(0)\neq 0$ then $s(0)=1$ and necessarily $s(1)=0$; this forces 
$ \sigma(0,0)=(1,1)$ and $\sigma(1,1)=(0,0)$. This is actually a biquandle coming from
the bijection construction 
\[
 \sigma(x,y)=(y+1,x-1) : x,y\in \Z/2\Z
\]
We will call this solution  the {\bf antiflip}.

\

If $|X|=3$, we call the elements $X=\{0,1,2\}$ and identify $X=\Z/3\Z$. The
above constructions give the following list:

\begin{enumerate}
\item There are three isomorphism classes of quandles of 3 elements:
\begin{enumerate}
 \item the trivial quandle ($x\t y=x$ for all $x,y$), this gives the flip solution.
\item $D_3$: $x\t y=2y-x$, for $x,y\in \Z/3\Z$, which gives two solutions
 \[
  \sigma(x,y)=(y,x\t y)=(y,2y-x)
 \]
and its inverse
 \[
  \ra\sigma(x,y)=(x\t^{-1} y,x)=(2x-y,x)
 \]
\item another quandle which we call $Q_3$, with operation given by 
 $-\t 0=(12)$ (the permutation $1\leftrightarrow 2$),
 and  $-\t 1= -\t 2=\id$.
 The solution 
 \[
  \sigma(x,y)=(y,x\t y)
 \]
 behaves like the flip for $x,y=1,2$, but
 \[
\sigma(0,1)=(1,0),\ \sigma(1,0)=(0,2)
\]
 \[
\sigma(0,2)=(2,0),\ \sigma(2,0)=(0,1)
\]
One can check that this equalities can be achieved with the formula
\[
 \sigma(x,y)=(y,-x-xy^2)=
(y,-x(1+y^2)) : x,y\in \Z/3\Z
\]
We also have the inverse solution.
\end{enumerate}

\item  If $X=\{0\}\coprod\{1,2\}$, with 
$\sigma(0,i)=(i,0)$ and
$\sigma(i,0)=(0,i)$, then the flip on $\{1,2\}$ produces again the flip on three elements,
but the other solution produce a new solution of the YBeq:

\[\sigma(1,2)=(1,2), \ \sigma(2,1)=(2,1)\]
\[\sigma(1,1)=(2,2),\ \sigma(2,2)=(1,1)\]
\[\sigma(0,i)=(i,0),\ \sigma(i,0)=(0,i)\]
One may check that this equalities are given by
$\sigma(x,y)=(y+x^2y,x+y^2x)$.

\item Wada's construction for $\Z_3$ gives the example
$\sigma(x,y)=(-y,x-y)$ and its inverse: $\ra \sigma(x,y)=(y-x,-x)$.

\item Bijection biquandles:
\begin{enumerate}
\item Using the bijection $\mu(x)=-x$ we have the solution $ \sigma(x,y)=(-y,-x)$,
\item if $\mu(x)=x+1$ then we have the solution $\sigma(x,y)=(y+1,x-1)$. 
One can check that all  bijections $\neq \id$ are conjugated to one of these.

\end{enumerate}
\end{enumerate}
For
$M=R=\Z_3$,  the units of $R$ are $\pm 1$:
 the Alexander biquandle gives Wada's for $s=t=-1$, the Dihedral quandle solution for
$s=1$ and $t=-1$, the flip for $s=t=1$, and the bijection solution
$ \sigma(x,y)=(-y,-x)$ when $s=t=-1$, so we have no new solution in this small cardinality
considering the Alexander biquandle.

In this way, we obtain 10 solutions of the YBeq that are biquandles,
three of them (flip, $Q_3$ and $D_3$) are quandle solutions.
In A. Bartholomew and R. Fenn's classification list (see \cite{BF})
there are 7 biquandles that are not quandles,
but we remark that, 
in Bartholomew and Fenn's list, if a solution $\sigma$
is listed, then $\ra\sigma$ is not listed, even thought
$\sigma$ may not be isomorphic to $\ra\sigma$, as solution
of the Yang-Baxter equation. For instance, for every quandle
$(X,\t)$, the solution with ``name'' $X$ is
$\sigma(x,y)=(y,x\t y)$
but the inverse solution
 $\sigma(x,y)=(x\t^{-1} y,x)$
do not appear in the list.
We use the notation $BQ^3_i$, $i=1,\dots ,10$ for the biquandles solutions
of Bartholomew and Fenn, and we denote
$BQ^{3*}_i$ the inverse solution with respect to $BQ^3_i$.

\subsection{Computations of $U_{nc}$}

We begin with an  explicit computation of the group $U_{nc}$
for a particular example;
Wada: $ \sigma(x,y)=(-y,x-y)$, which is also
BiAlexander with $M=\Z/3\Z, s=-1,t=1$.
 
The fixed points are $(x,s(x))=(x,-x)$, that is $(0,0),(1,2),(2,1)$, so $(0,0)=(1,2)=(2,1)=1$.
Conditions (Unc1-2) are:
 \[
(x,y)(x-y,z)=(x,-z)(x+z,y-z)
\]
 \[
(-y,-z)=(y,z)
    \]    
From the second equality we get generators $a=(0,1)=(0,-1)$,
$b=(1,0)=(-1,0)$ and $c=(1,1)=(-1,-1)$.

From the first cocycle equation
 \[
(x,y)(x-y,z)=(x,-z)(x+z,y-z)
\]
if $y=sx$ (using $(x,sx)=1$ and also using Unc2) we get a trivial equality. If
 $y=sz$    we also get trivial equality, so in principle we have
 $3^3 - 9 - 9 +3=12$ equations.
  If we write them {\em all} in terms of $a$, $b$ and $c$ we get
  
\[
1=1,\ a=a,\ ab=a,\ ac=ac, \ b^2=b^2,\ bc=c,
\]
\[
 1=1, \ c=c,\ b=b,\ 1=1,\ ca=ca,\  ca=b
\]
of course we can see trivial equations, if we exclude them we get
\[
 ab=a,\ bc=c,\  ca=b
\]
from the first (and also the second) equation we can see that te generator $b$ is trivial.  If we write again all the equations with the replacement $b=1$ we get
\[
 ca=1
\]
We conclude $\Unc=Free(a,c)/(ac=1)\cong Free(a)$, but also we have
described a procedure that can be implemented in a computer program:

\begin{enumerate}
\item Add to the set $X\times X$ a new element ''1''  and begin to define
 an equivalence relation $(x,s(x))\sim 1$.
\item from the second condition, add
$(y,z)\sim(\sigma^1 (x, y), \sigma^1(\sigma^2(x,y),z))$ to the 
equivalence relation.

More precisely, given a list of subsets of $(X\times X)\coprod \{1\}$
 whose union is  $(X\times X)\coprod \{1\}$
(if this is not the case we add  the sets $\{(x,y)\}$ to the list) one can easily
give an algorithm producing the partition of $(X\times X)\cup\{1\}$ corresponding
to the equivalence relation generated by the list of subsets:
 for each pair of subsets of the list, with nontrivial
intersection, we replace these two subsets by their union, run over all different pairs, 
 and iterate until saturate.
We call {\em classes} this list of subsets.

\item From the data {\em classes}, choose representatives 
(if the list of subsets is ordered and their members are ordered, 
just pick the first member for each element of the list). Write
 down {\em all} cocycle 
equations, in terms of these representatives.
\item Eliminate the trivial equations, and
\begin{itemize}
\item for any cocycle equation where 1 appears, in case one found $a.1$, replace it
 by $1.a$, so we do not
count twice the same equation.
\item For any cocycle equation  of the form 
  $ac=bc$ or $ca=cb$, add
 $a\sim b$ and recalculate the equivalence relation that it 
generates.
\end{itemize}
With the new data {\em classes}
go to step 3, and iterate the process until it stabilizes. 
\end{enumerate}

The set in {\em classes}
containing 1 is called $S$, this is a list of  trivial elements in $\Un$.
A set 
of representatives of the others element of {\em classes}
give a set of generators of  $\Un$. The remaining 
nontrivial 2-cocycle equations, 
written in terms of these representatives, give a set of relations.
This algorithm produce a relatively small set of generators, and all the 
relations between them. For instance, in the example above will produce
$\Unc=Free(a,b)/(ab=1)$.
We have implemented this algorithm in G.A.P, and  it produces
the following:

\

For the dihedral quandle $D_3$:

\noindent Set of  generators:
$\{
f_{1}=[1,2], f_{2}=[1,3], f_{3}=[2,1], f_{4}=[2,3], f_{5}=[3,1], f_{6}=[3,2]
\}$. 
Trivial elements $S$: $1=[1,1]=[2,2]=[3,3]$. Relations:
\[ 
f_{1}f_{5}=f_{2},\ f_{2}f_{3}=f_{1},\ f_{3}f_{6}=f_{4},\ f_{4}f_{1}=f_{3},\ f_{5}f_{4}=f_{6},\ f_{6}f_{2}=f_{5}
\]
For Wada, the set of generators is $\{
f_{1}=[1,2]=[1,3], f_{2}=[2,2]=[3,3]\}$. Trivial elements: 
$1=[1,1]=[2,1]=[2,3]=[3,1]=[3,2]$. Relations:
\[
f_{2}f_{1}=1\]
An important remark on notation: G.A.P. always gives a numbering of the
elements of its objects (and in particular, one cal always order them), 
for instance, $\Z/3\Z=\{\ra 0,\ra 1,\ra 2\}$ has
three elements, that G.A.P. number as 
$[1,2,3]$, where 1 is the first element, 2 is the second and so on; in order to 
identify the element one has to see the label, and in this case $1$ corresponds to 
$\ra 0$, $2$ corresponds to $\ra 1$ and $3$ to $\ra 2=\ra -1$, so the equation
$1=[1,1]=[2,1]=[2,3]=[3,1]=[3,2]$ means
\[
1=(\ra 0,\ra 0)=(\ra 1,\ra 0)=(\ra 1,\ra 2)=(\ra 2,\ra 0)=(\ra 2,\ra 1)
\]
The new element "1" that we add to $X\times X$ is called $[\ ]$, so for example
the equality $1=[1,1]=[2,1]=[2,3]=[3,1]=[3,2]$ comes from the fact that
the list (of lists) {\em classes} contains the element
$[[\ ],[1,1],[2,1],[2,3],[3,1],[3,2]]$.

In the following we use the notation as in G.A.P.

For the inverse solution to Wada's: generators $\{
f_{1}=[1,2]=[2,1]=[3,3], f_{2}=[1,3]=[2,2]=[3,1]\}$,
trivial elements: $1=[1,1]=[2,3]=[3,2]$, relations:
\[
f_{2}f_{2}=f_{1},\ f_{1}f_{1}=f_{2}
\]
For the flip on 2 elements $\{1,2\}$: generators $\{
f_{1}=[1,2], f_{2}=[2,1], 
\}$, trivial elements: $1=[1,1]=[2,2]$, and no equations at all. 

For the flip in 3 elements $\{1,2,3\}$: generators:
$\{
f_{1}=[1,2], f_{2}=[1,3], f_{3}=[2,1], f_{4}=[2,3], f_{5}=[3,1], f_{6}=[3,2]
\}$, trivial elements: $1=[1,1]=[2,2]=[3,3]$, relations:
$f_{2}f_{1}=f_{1}f_{2}$, $f_{4}f_{3}=f_{3}f_{4}$, $ f_{6}f_{5}=f_{5}f_{6}$.

Taking the list of biquandles
of cardinality 3 from Bartholomew and Fenn's list, adding the inverse
solutions (when they are not isomorphic), we obtain the table below.

We remark that the procedure gives not only the number of 
generators, but the full equivalence class, we omit 
the full data in the table just for space considerations.
We also add to the table the order of $\sigma$, and the number
of fixed points on the diagonal $\Delta:=\{(x,x):x\in X\}$, for instance,
$\Delta^\sigma=\Delta$ if $X$ is a quandle.

\[
\begin{array}{|c|c|c|c|c|c|}
\hline
name&\sigma& generators&equations&order&\#\Delta^\sigma\\
&&of\ \Unc && of\ \sigma& \\
\hline flip &BQ^3_1&6&f_{2}f_{1}=f_{1}f_{2}, f_{4}f_{3}=f_{3}f_{4},&2&3 \\
                    &  &  & f_{6}f_{5}=f_{5}f_{6}, && \\
\hline a\hbox{-}flip\cup\{1\} &BQ^3_2&3&f_{3}f_{2}=f_{2}f_{3},&2 & 1\\
\hline &BQ^3_3&3&-&4& 1\\
\hline &BQ^{3*}_3&3&-&4 &1 \\
\hline Wada(\Z_3)&BQ^3_4&2&f_{2}f_{1}=1, & 3&1 \\
\hline inv.\ Wada(\Z_3)&BQ^{3*}_4&2&f_{1}f_{1}=f_2,\ f_2f_2=f_1 & 3&1 \\
\hline &BQ^3_5&3&f_{2}f_{1}=f_{1}f_{2}, &2 & 3\\
\hline Q_3&BQ^3_6&3&-& 4& 3\\
\hline inverse\ Q_3&BQ^{3*}_6&3&-&4 &3 \\
\hline (x,y)\mapsto(\hbox{-}y,\hbox{-}x) &BQ^3_7&3&f_{3}f_{2}=f_{2}f_{3}, & 2& 1\\
\hline D_3&BQ^3_8&6&f_{1}f_{5}=f_{2}, f_{2}f_{3}=f_{1}, f_{3}f_{6}=f_{4},& 3&3 \\
                     & &  & f_{4}f_{1}=f_{3}, f_{5}f_{4}=f_{6}, f_{6}f_{2}=f_{5}, & 3& \\
\hline inverse\  D_3&BQ^{3*}_8&0&-,& 3&3 \\
\hline &BQ^3_9&2&f_{2}f_{2}=f_{1}, f_{1}f_{1}=f_{2}, &3 & 0\\
\hline &BQ^{3*}_9&0&- & 3& 0\\
\hline involutive (\Z_3)& BQ^3_{10}&2&f_{1}f_{2}=f_{2}f_{1}& 2&0 \\
\hline
\end{array}
\]

We remark that for some cases (i.e. $BQ_{4,8,9}$) the invariant $\Unc$ 
distinguish between $\sigma$ and $\ra\sigma$. For $BQ^3_3$, the generators are the same in  the strong sense that the equivalent classes
of generators (as equivalent classes in $X\times X$)
are the same, the relations are also the same (no relation at all),  so they will give the same knot/link invariants, even though $\sigma$ and $\ra\sigma$ are non isomorphic biquandle solutions.

For most of the cases there is no much more to say in order to describe
 $\Unc$ as a group, for instance $\Unc(flip)=\Unc(BQ^3_1)=\Z^2*\Z^2*\Z^2$:
 the free product of three copies of $\Z^2$,
 $\Unc(BQ^3_3)\cong
\Unc(BQ^3_6)\cong F_3$: the free group on 3 generators,
 $\Unc(BQ^3_2)\cong
 \Unc(BQ^3_5)\cong
 \Unc(BQ^3_7)\cong Free(a,b,c)/(bc=cb)$,
 $ \Unc(BQ^3_10)\cong \Z^2$,. On the other hand, 
  there are some simplifications for the remaining cases:
  
 \[
 \Unc(BQ^3_4)\cong Free(a,b)/(ab=1)=Free(a)\cong\Z
 \]
\[
 \Unc(BQ^3_9)\cong BQ^{3*}_4
\cong Free(a,b)/(a^2=b,\ b^2=a)
 \cong Free(a)/(a^3=1)
 \]
\[
\Unc(D_3)
\Unc(BQ^3_8)
=F_6/
(f_{1}f_{5}=f_{2}, f_{2}f_{3}=f_{1}, f_{3}f_{6}=f_{4},
 f_{4}f_{1}=f_{3}, f_{5}f_{4}=f_{6}, f_{6}f_{2}=f_{5})
 \]
Call $a:=f_1$, $b:=f_5$ and $c:=f_4$, we have
\[
ab=f_{2}, f_{2}f_{3}=a, f_{3}f_{6}=c,
 ca=f_{3}, bc=f_{6}, f_{6}f_{2}=b
\]
in particular, we can solve $f_2$, $f_3$ and $f_6$ in terms of $a$, $b$,
$c$, so $\Unc$ is generated by $a,b,c$. In order to know the relations,
we replace
$f_2=ab$, $f_{3}=ca$ and $f_6= bc$ in the above equations and get
\[
 abca=a, cabc=c, bcab=b
\]
or equivalently
\[
 abc=1, cab=1, bca=1
\]
whose solution is $c=(ab)^{-1}$.
We conclude $\Unc(D_3)=Free(a,b)$.

The computer program gives the set of generators and relations in a
reasonable human time for biquandles of cardinality 12 or less. As a matter
of numerical experiment, the groups associated to the inverse solution of
biAlexander solution on $\Z_m$, for $s=-1$, and $t=1$, are cyclic of order $m$
(in a non trivial way)
if $m=3$, $5$, $7$, $11$, $13$ (and much more complicated groups for $m=4,6,8,9,10,12$).
 We don\rq{}t know if this is a general fact
for all primes $p$.
There are nevertheless  some general results that can be prove without
computer:

\subsubsection*{Inverse quandle solutions}

If $(X,\t)$ is a quandle then $\sigma(x,y)=(y,x\t y)$
is a biquandle, and condition Unc2 is trivial. But 
if we consider the {\em inverse} solution:
$\ra\sigma(x,y)=(y\tt x, x)$
then condition (Unc2) is not trivial, we have the relations
\[
(x,y)(x,z)=(x,z\tt y)(x,y)
\]
 \[
(y\tt x, z\tt x)=(y,z)
    \]
\[
 (x,x)=1
\]
We see that, in presence of the second identity, the first one can be modified into
\[
(x,y)(x,z)=(x\t y,z)(x,y)
\hskip 1cm \mathrm{(Unc\ra Q)}\]
or also
$(x\t z,y\t z)(x,z)=(x\t y,z)(x,y)
$.
Notice that (Unc$\ra Q$), with $x=z$, says
\[
(x,y)(x,x)=(x\t y,x)(x,y)
\To
(x,y)=(x\t y,x)(x,y)
\To
1=(x\t y,x)\]
This equation, for $x=y$, gives $1=(x\t x,x)=(x,x)$.
That is, $1=(x\t y,x)$ implies the type I condition.
So, we may list a set of relations for $U_{nc}(\ra \sigma)$ in the following way

\[
\left\{
\begin{array}{rclc}
 (x,y)(x,z)  &=&(x\t y,z)(x,y) &(Unc\ra{Q}1)\\
(x\t y,x)    &=&  1           &(Unc\ra{Q}2)\\
(x\t z,y\t z)&=&(x,y)        &(Unc\ra{Q}3)
\end{array}
\right.
\]

\begin{coro}
\label{coro:inverso}
 Let $Q$ be a quandle 
 and consider the biquandle solution
 $\ra\sigma(a,b)=(b\tt a, a)$.
If $Q$ is such that  for every $z \in Q$ there exists $y$ with $z=x\t y$, then
$U_{nc}(\ra\sigma)=1$.
\end{coro}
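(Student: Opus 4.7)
The plan is to exploit relation $(Unc\ra{Q}2)$, namely $(x\t y,x)=1$ for all $x,y\in Q$, which the paper has already derived as a consequence of the presentation of $U_{nc}(\ra\sigma)$. The group $U_{nc}(\ra\sigma)$ is generated by the symbols $(z,x)$ for $z,x\in Q$, so it suffices to show that every such generator is trivial.

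Fix arbitrary generators $z,x\in Q$. By the hypothesis on $Q$, there exists $y\in Q$ such that $z=x\t y$. Then, substituting into $(Unc\ra{Q}2)$, we obtain
\[
(z,x)=(x\t y,x)=1
\]
in $U_{nc}(\ra\sigma)$. Since $z$ and $x$ were arbitrary, every generator is trivial, hence $U_{nc}(\ra\sigma)=1$.

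There is essentially no obstacle: the corollary is an immediate consequence of the relation $(Unc\ra{Q}2)$ extracted in the discussion preceding the statement, together with the surjectivity hypothesis on the maps $y\mapsto x\t y$. The only point worth stressing is that relations $(Unc\ra{Q}1)$ and $(Unc\ra{Q}3)$ play no role in this particular argument; the entire force of the conclusion is carried by the type I relation in the form $(x\t y,x)=1$, whose presence was already shown to follow from $(Unc\ra{Q}1)$ combined with the case $x=z$.
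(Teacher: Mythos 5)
Your proposal is correct and coincides with the paper's own proof: both argue that, given a generator $(z,x)$, the hypothesis provides $y$ with $z=x\t y$, so the relation $(x\t y,x)=1$ forces $(z,x)=1$ and hence $U_{nc}(\ra\sigma)=1$. The additional remark that only this one relation is needed is accurate but does not change the argument.
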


\begin{proof}
Given $(z,x)$, let $y$ be such that $z=x\t y$, then
$ (z,x)=(x\t y,x)=1$.
\end{proof}

\begin{ex} If $(X,\t)=D_n=(\Z/n\Z, x\t y=2y-x)$
with $n$ is odd
 then $U_{nc}(\ra\sigma)=1$.
\end{ex}

 \section{The reduced $U_{nc}$}
 
 We recall that if $f:X\times X\to G$ is a (type I)  cocycle and
 $\gamma:X\to G$ is a function satisfying $\gamma(x)=\gamma(sx)$,
 then
 $f_\gamma(x,y):=\gamma(x)f(x,y)\gamma(\sigma^2(x,y))^{-1}$
 is also a (type I) 2-cocycle, and the knot/link invariant produce
 by $f$ is the same as the one produced by $f$. In particular,
 one can consider the universal 2-cocycle
  $\pi:X\times X\to \Unc$ and try to see if there is a cohomologous one,
  simpler that $\pi$. This procedure leads to a construction that we call
  {\em reduced} universal group:
  
  \begin{defi}
Let $\gamma:X\to U_{nc}$ be a (set theoretical) map such that $\gamma(x)=\gamma(s(x))$ and
 $\pi_\gamma:X\times X\to U_{nc}$  given by
 \[
 \pi_\gamma(x,y)= \gamma(x)(x,y)\gamma(\sigma^2(x,y))^{-1}
 \]
Define $S=\{(x,y)\in X\times X: \pi_{\gamma}(x,y)=1\in U_{nc}\}
\subseteq X\times X$ and consider the group
$\Ur$ defined by
\[
\Ur:=\Unc/<\pi(x,y)/(x,y)\in S>
\]
Denote $\overline{[x,y]}\in \Ur$ the class of $(x,y)$ and
$p:X\times X\to \Ur$ the map $p(x,y)=\overline{[x,y]}$.
  \end{defi}

 \begin{teo}\label{Ured}
With notations as in the above definition,
The map $p:X\times X\to \Ur$ has the following universal property:
 \begin{itemize}
 \item $p$ is a 2-cocycle.
 \item for any group $G$ and 2-cocycle $f:X\times X\to G$,
  there exists a cohomologous map $f^{\Gamma}$ and a group homomorphism
 $ f^{\Gamma}:\Ur\to G$ such that $f^{\Gamma}$ factorizes through $p$, that is
 $f^{\Gamma}=f^{\Gamma}p$.
 \end{itemize}
 \end{teo}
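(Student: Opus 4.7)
The first bullet is essentially formal: by construction $p$ is the composite $X\times X \xrightarrow{\pi} \Unc \xrightarrow{q} \Ur$, where $q$ is the quotient group homomorphism. Since $\pi$ is a type I non-commutative 2-cocycle by Proposition \ref{universal property} and the defining identities (Unc1), (Unc2), (Unc3) are preserved by any group homomorphism, the same holds for $p$.

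For the second bullet I proceed as follows. Given a type I 2-cocycle $f: X\times X \to G$, the universal property of $\Unc$ gives a unique group homomorphism $\wt f:\Unc\to G$ with $f=\wt f \circ \pi$. Push $\gamma$ forward: set $\gamma^G := \wt f \circ \gamma : X\to G$, which inherits $\gamma^G(x)=\gamma^G(s(x))$ from $\gamma$, and define
\[
f^\Gamma(x,y) := \gamma^G(x)\,f(x,y)\,\gamma^G(\sigma^2(x,y))^{-1}.
\]
By Definition \ref{cohomolo} this is a 2-cocycle cohomologous to $f$, and type I is preserved since $\sigma^2(x,s(x))=s(x)$ together with $\gamma^G(x)=\gamma^G(s(x))$ forces $f^\Gamma(x,s(x))=1$. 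Because $\wt f$ is a homomorphism, a one-line calculation yields the key identity
\[
f^\Gamma(x,y) \;=\; \wt f\bigl(\pi_\gamma(x,y)\bigr).
\]

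The universal property of $\Unc$ applied to the cocycle $f^\Gamma$ then produces $\wt{f^\Gamma}:\Unc\to G$ with $\wt{f^\Gamma}([x,y]) = f^\Gamma(x,y)$. For every $(x,y)\in S$ one has $\pi_\gamma(x,y)=1\in \Unc$ by definition of $S$, so
\[
\wt{f^\Gamma}([x,y]) \;=\; f^\Gamma(x,y) \;=\; \wt f(\pi_\gamma(x,y)) \;=\; \wt f(1) \;=\; 1.
\]
Thus $\wt{f^\Gamma}$ annihilates every generator of the normal subgroup $\langle \pi(x,y):(x,y)\in S\rangle$ defining $\Ur$, and hence descends to a group homomorphism $\wh{f^\Gamma}:\Ur\to G$ satisfying $\wh{f^\Gamma}\circ p = f^\Gamma$, which is the required factorization.

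The main subtlety is conceptual rather than computational: the extra relations imposed on $\Unc$ to form $\Ur$ depend on $\gamma$, and what makes the descent automatic is the single identity $f^\Gamma = \wt f \circ \pi_\gamma$. That identity holds precisely because the passage from $\gamma$ (valued in $\Unc$) to its $G$-analogue $\gamma^G$ is performed through the very homomorphism $\wt f$ coming from the original universal property of $\Unc$; choosing $\gamma^G$ in any other way would break the factorization.
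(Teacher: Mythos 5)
Your proof is correct and follows essentially the same route as the paper's: both obtain $\wt f$ from the universal property of $\Unc$, identify $f^\Gamma$ with $\wt f\circ\pi_\gamma$ (you define it by the coboundary formula and derive the identity, the paper defines it as the composite and derives the coboundary formula), invoke the universal property a second time, and use $\pi_\gamma(S)=1$ to descend to $\Ur$. Your explicit checks that $f^\Gamma$ is type I and that the descent only requires killing the generators of $\langle\pi(S)\rangle$ are welcome details the paper leaves implicit.
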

 \begin{proof}
 The fact that $p$ is a 2-cocycle is immediate.
  By
(\ref{universal property}) we obtain the existence of the unique group morphism $\overline{f}$ such that 
\[
 \xymatrix{
 X\times X\ar[d]_{\pi}\ar[r]^f&G\\
 U_{nc}\ar@{-->}[ru]_{\exists !\ \ra f}
 }\]
  commutes.
  Define $f^{\Gamma}:=\ra f\circ\pi_{\gamma}$;
  in diagram:
  $
 \xymatrix{
 X\times X\ar[r]^{\pi_{\gamma}}\ar[dr]_{f^{\Gamma}} & U_{nc}\ar[d]_{\overline{f}}\\
&G
 }$

 We have
 $$f^{\Gamma}(x,y)=\ra{f}\circ\pi_{\gamma}(x,y)
 =\ra{f}\circ\gamma(x)\ra{f}\circ \pi(x,y)\left(\overline{f}\circ\gamma(\sigma^{2}(x,y))\right)^{-1}
 $$
 so $f^{\Gamma}$ and $\overline{f}\circ\pi=f$ are cohomologous.
 Using again the universal property of $\Unc$, $f^\Gamma$ factorizes through $\Unc$, hence
 there exists a group homomorphism $\ra{f^\Gamma}:\Unc\to G$ such that
 $f^\Gamma=\ra{f^\Gamma}\circ\pi$.
 
 On the other hand, since $\pi_{\gamma}(S)=1$ we have $f^{\Gamma}(S)=\ra f(\pi_\gamma(S))=\ra f(1)=1$,
 but also
 $f^\Gamma(S)=\ra{f^\Gamma}(\pi(S))$, so the group homomorphism
 $\overline{f^{\Gamma}}:\Unc\to G$ induces a map
 $f^\gamma\Ur=\Unc/\pi(S)\to G$
such that, if  $p':U_{nc}\rightarrow U_{nc}/\pi(S)$ is the canonical group projection to the quotient ($p=p'\circ\pi$),
then $\ra {f^\Gamma}=f^\gamma\circ p'$. In diagram:
\[
  \xymatrix{
 X\times X\ar[dr]_{f^{\Gamma}}\ar[r]^\pi \ar@/^{2pc}/[rr]^p&   U_{nc}\ar[d]_{\exists! \ra{f^{\Gamma}}} \ar[r]^{\!\! p'}
 & \Unc/\pi(S)\ar@{-->}[ld]^{\exists f^{\gamma}}&\ar@{=}[l]\Unc^{\gamma}\\
 &G&
 }\]
 Clearly $f^{\Gamma}=f^{\gamma}\circ p$.

  \end{proof}
 For a given $\gamma$, the associated $\Ur$ is called the
 {\em reduced} universal group.

 \begin{coro}
Given a biquandle $X$, if there exists $\gamma:X\to\Unc$ such that $\Ur=1$ then every 2-cocycle
in $X$ is trivial.
 \end{coro}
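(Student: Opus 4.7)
The plan is to reduce immediately to Theorem \ref{Ured}, which does essentially all the work; the corollary is really a direct application of the universal property of $\Ur$ combined with Proposition \ref{cohomologous}. The statement ``every 2-cocycle is trivial'' should be read as ``every 2-cocycle is cohomologous to the constant cocycle $1$,'' and therefore gives the trivial knot/link invariant.

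Concretely, I would begin by fixing an arbitrary group $G$ and an arbitrary (type I) non-commutative 2-cocycle $f\colon X\times X\to G$. By Theorem \ref{Ured}, applied to the given $\gamma\colon X\to\Unc$ satisfying $\gamma(x)=\gamma(s(x))$, there exist a cohomologous cocycle $f^{\Gamma}\colon X\times X\to G$ and a group homomorphism $f^{\gamma}\colon\Ur\to G$ such that $f^{\Gamma}=f^{\gamma}\circ p$, where $p\colon X\times X\to\Ur$ is the universal reduced cocycle.

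Now I would use the hypothesis $\Ur=1$. The only group homomorphism from the trivial group to $G$ is the one sending everything to $1_G$, so $f^{\gamma}$ is the trivial morphism. Consequently $f^{\Gamma}(x,y)=f^{\gamma}(p(x,y))=1_G$ for every $(x,y)\in X\times X$, i.e.\ $f^{\Gamma}$ is identically $1$. Since $f\sim f^{\Gamma}$ by construction and cohomologous cocycles produce the same invariant by Proposition \ref{cohomologous}, the invariant $\overrightarrow{\Psi}(L,f)=\overrightarrow{\Psi}(L,f^{\Gamma})$ is the one associated to the constant cocycle $1$, which is trivial on every link and every coloring.

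There is no real obstacle here: the corollary is a pure ``universal property'' consequence, and the only thing to be careful about is the interpretation of ``trivial.'' If one wanted instead the stronger literal statement that $f\equiv 1$, it would of course be false (one can always twist $1$ by an arbitrary $\gamma$), so the correct reading is up to cohomology, and in that reading the argument above is complete.
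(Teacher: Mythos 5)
Your argument is correct and is exactly the (implicit) one the paper intends: the corollary is stated without proof as an immediate consequence of Theorem \ref{Ured}, namely that $f\sim f^{\Gamma}=f^{\gamma}\circ p$ forces $f^{\Gamma}\equiv 1$ when $\Ur=1$, so $f$ is a coboundary and, by Proposition \ref{cohomologous}, yields the trivial invariant. Your remark that ``trivial'' must be read as ``cohomologous to the constant cocycle'' is also the right reading of the statement.
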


 A general example of the above situation is given by some Alexander biquandles.
  \subsection{The Alexander biquandle}
 
Let $A=\Z[s,t,s^{-1},t^{-1}]$,
 $X$ an $A$-module
 and $\sigma:X\times X\to X\times X$ given by
 the matrix
\[
\left(\begin{array}{cc}
0&t\\
s&(1-st)
\end{array}\right)
\]
 equivalently $
  \sigma(x,y)=(sy,tx+(1-st)y)$.
The condition of being a fixed point is $x=sy$:
\[
 \sigma(sy,y)=((sy),t(sy)+(1-st)y)=(sy,y)
\]
 
Cocycle conditions are
 \[
 \left\{
 \begin{array}{rcl}
 (x,y)(tx+(1-st)y,z)&=&(x,sz)(tx+(1-st)sz,ty+(1-st)z)
\\
(sy, sz)&=&(y,z)
\\
(sy,y)&=&1
\end{array}
\right.\]

Following M. Gra\~na, we can adapt to the biquandle situation the proof for the quandle case
(see Lemma 6.1 of \cite{G}). Consider 
$\gamma:X\to U_{nc}$ given by 
$\gamma(x)=(0,cx)$,
where $c=(1+st)^{-1}$. Notice that $c$ is an endomorphism commuting with
$s$, and 
 $(sy,sz)=(y,z)\in U_{nc}$, so
 \[
\gamma(x)=(0,cx)=
(s0,scx)=
(0,csx)=
\gamma(sx)
\]
hence, we can use $\gamma$ in order to get another
2-cocycle,  cohomologous to $\pi$. Recall
\[
\pi_\gamma(x,y):= \gamma(x)(x,y)\gamma(\sigma^2(x,y))^{-1}
\]
where
$\sigma(x,y)=(sy,tx+(1-st)y)$, so
\[
\pi_\gamma(x,y)= (0,cx)(x,y)
 \big(0, c ( tx+(1-st)y ) \big)^{-1}
 \]
in particular
\[
\pi_\gamma(0,y)= (0,0)(0,y)(0,c(1-st)y)^{-1}
=(0,y)(0,y)^{-1}
=1
 \]
so, the class of $(0,y)$ is trivial  in $\Ur$.

\begin{lem}
Let $X$ be an Alexander birack such that $(1-st)$ is invertible in $\End(X)$. If we define $\gamma$ as above, then,
 the following identities hold in $\Ur$:
 \begin{enumerate}
 \item
$(x,0)=1$  for all $x$, and
\item $(a,b)=(a,b+a)$ for all $ a,b\in X$.
\end{enumerate}
\end{lem}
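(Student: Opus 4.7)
Throughout the sketch I abbreviate $P:=1-st$ and $c:=P^{-1}$, and write $[x,y]$ for the class of $(x,y)$ in $\Ur$. At my disposal I have the cocycle relations (Unc1)--(Unc3), together with the key relation $[0,y]=1$ established in the paragraph preceding the lemma.

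For part~(1), my plan is to specialize (Unc1) at $(x,y,z)=(0,y,0)$. In the Alexander biquandle one has $\sigma^{1}(y,0)=0$, $\sigma^{2}(0,y)=Py$, $\sigma^{2}(0,0)=0$ and $\sigma^{2}(y,0)=ty$, so (Unc1) reads
\[
[0,y]\cdot[Py,0]\;=\;[0,0]\cdot[0,ty].
\]
Every factor except $[Py,0]$ has first coordinate $0$ and hence equals $1$ in $\Ur$. Thus $[Py,0]=1$ for every $y\in X$; since $P$ is invertible on $X$, this is equivalent to $[x,0]=1$ for every $x\in X$.

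For part~(2), I proceed in three stages. First, substituting $z=0$ into (Unc1) and using part~(1) on both ends yields the homogeneity $[x,y]=[tx,ty]$; combined with (Unc2) we have also $[x,y]=[sx,sy]$. Second, setting $x=0$ in (Unc1) and cancelling the two $[0,\ast]$ factors produces the auxiliary identity
\[
[u,z]\;=\;[sPz,\;tcu+Pz]\qquad (\star)\qquad\text{for all }u,z\in X.
\]
Third, I apply (Unc1) at $(x,y,z)=(a,b,s^{-1}(b+a))$, a choice tailored so that $\sigma^{1}(y,z)=b+a$ and hence $[x,\sigma^{1}(y,z)]=[a,b+a]$. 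Working out the other factors using $\sigma^{2}(a,b)=ta+Pb$, $\sigma^{2}(a,b+a)=ta+Pa+Pb$ and $\sigma^{2}(b,s^{-1}(b+a))=s^{-1}(b+a)-ta$, the cocycle identity becomes
\[
[a,b]\,[ta+Pb,\;s^{-1}(b+a)]\;=\;[a,b+a]\,[ta+Pa+Pb,\;s^{-1}(b+a)-ta].
\]
Consequently $[a,b]=[a,b+a]$ is equivalent to the single equality $[E,F]=[E+Pa,F-ta]$ with $E:=ta+Pb$ and $F:=s^{-1}(b+a)$.

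The main obstacle is exactly this last step: the two generators differ only by the ``shift'' $(Pa,-ta)$ in their two coordinates, and one must show the shift is absorbed in $\Ur$. My strategy is to exploit (Unc3) in the form $[sPa,Pa]=1$ (taking $y=Pa$). Apply $(\star)$ iteratively to both sides: since the transformation $T(u,z)=(sPz,\;tcu+Pz)$ is linear, $T^{n}$ sends the shift $(Pa,-ta)$ to $(-st)^{n}(Pa,-ta)$, while $[T^{n}(E,F)]=[E,F]$ and $[T^{n}(E+Pa,F-ta)]=[E+Pa,F-ta]$ for all $n$ by $(\star)$. One then uses the trivial class $[sPa,Pa]=1$ (and its images $[T^{n}(sPa,Pa)]=1$) together with the homogeneity $[x,y]=[tx,ty]=[sx,sy]$ to rewrite the difference between the two sequences as a product of (Unc3)-trivial classes; the computational difficulty lies in the scalar bookkeeping, but the scheme yields $[E,F]=[E+Pa,F-ta]$, and hence $[a,b]=[a,b+a]$.
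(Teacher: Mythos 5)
Part (1) of your proposal is correct and is essentially the paper's own argument: specializing the first cocycle relation at $x=z=0$, using $(0,*)=1$, and inverting $1-st$. The auxiliary identities you derive at the start of part (2) — the homogeneities $[x,y]=[tx,ty]$ and $[x,y]=[sx,sy]$, and the identity $(\star)$ obtained by setting $x=0$ — are also correctly derived. The problem is the last step of part (2). Your specialization of (Unc1) at $(a,b,s^{-1}(b+a))$ correctly reduces $[a,b]=[a,b+a]$ to $[E,F]=[E+Pa,F-ta]$, but this is a statement of exactly the same shape as the one you are trying to prove (a generator is claimed invariant under a shift proportional to $a$), so no progress has been made. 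The proposed mechanism for closing it is not a proof: $[V]$ and $[V+w]$ are single generators of a group presented by (Unc1)--(Unc3), and the fact that the shift $w=(Pa,-ta)$ is an eigenvector of your linear map $T$ with eigenvalue $-st$, together with the triviality of the generators $[sPa,Pa]$, gives no relation between $[V]$ and $[V+w]$ unless some instance of the cocycle relation actually connects them. ``Rewriting the difference between the two sequences as a product of (Unc3)-trivial classes'' is not a meaningful operation here — there is no additivity of classes to exploit — and the ``scalar bookkeeping'' you defer is in fact the entire content of the step.

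The paper closes this step differently, and the trick is worth internalizing: it specializes (Unc1) \emph{twice}, each time choosing the free variable so that one of the four factors has first coordinate $0$ and is therefore trivial by the already-established $(0,*)=1$. Taking $z=\frac{-t}{(1-st)s}x$ kills the last factor on the right and yields
\[
(x,y)\Big(tx+(1-st)y,\tfrac{-t}{(1-st)s}x\Big)=\Big(x,\tfrac{-t}{1-st}x\Big),
\]
while taking $y=\frac{-t}{1-st}x$ kills the second factor on the left and yields
\[
\Big(x,\tfrac{-t}{1-st}x\Big)=(x,sz)\Big(tx+(1-st)sz,\tfrac{-t^2}{1-st}x+(1-st)z\Big).
\]
Chaining the two and putting $y=sz$ makes the leading factors cancel and leaves two generators with the \emph{same} first coordinate, so the desired identity $(a,b)=(a,b+a)$ drops out after a change of variables. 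The moral is that you want specializations in which three of the four factors are already controlled (trivial, or shared between two instances of the relation), not one in which two factors remain unknown; your single specialization leaves you with as many unknowns as you started with.
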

\begin{proof}
1. From the cocycle condition,
 \[
(x,y)(tx+(1-st)y,z)=(x,sz)(tx+(1-st)sz,ty+(1-st)z)
\]
taking  $x=0=z$ we get
\[(0,y)((1-st)y,0)=(0,0)(0,ty)
\]
but we know that $(0,*)=1$ in $\Ur$, so $((1-st)y,0)=1$,
and because $(1-st)$ is a unity we conclude  $(x,0)=1$ for all $x$.

\

2.  Using the cocycle condition 
 \[
(x,y)(tx+(1-st)y,z)=(x,sz)(tx+(1-st)sz,ty+(1-st)z)
\]
and clear $z$ from
$tx+(1-st)sz=0$, that is, set
$z=\frac{-t}{(1-st)s}x$, then
 \[
(x,y)\Big(tx+(1-st)y,\frac{-t}{(1-st)s}x\Big)=\Big(x,s\frac{-t}{(1-st)s}x\Big)(0,ty+(1-st)z)
\]
or
 \[
 (x,y)\Big(tx+(1-st)y,\frac{-t}{(1-st)s}x\Big)
 =\Big(x,\frac{-t}{(1-st)}x\Big) \hskip 1cm(*)
\]
clearing $y=\frac{-t}{1-st}x$, get
 \[
\Big(x,\frac{-t}{1-st}x\Big)(0,z)=(x,sz)\Big(tx+(1-st)sz,t\frac{-t}{1-st}x+(1-st)z\Big)
\]
or
\[
\Big(x,\frac{-t}{1-st}x\Big)
=(x,sz)\Big(tx+(1-st)sz,\frac{-t^2}{1-st}x+(1-st)z\Big) \hskip 1cm(\dag)
\]
in particular, using RHS of (*) = LHS of (\dag) with $y=sz$ we get
\[
(x,sz)\Big(tx+(1-st)sz,\frac{-t}{(1-st)s}x\Big)
=
(x,sz)\Big(tx+(1-st)sz,\frac{-t^2}{1-st}x+(1-st)z\Big)
\]
so
\[
\Big(tx+(1-st)sz,\frac{-t}{(1-st)s}x\Big)
=
\Big(tx+(1-st)sz,\frac{-t^2}{1-st}x+(1-st)z\Big)
\]
Now we simply change variables. Call $a=tx+(1-st)sz$, then
$(1-st)z=a-\frac{t}{s}x$, replacing 
\[
\Big(a,\frac{-t}{(1-st)s}x\Big)
=
\Big(a,\frac{-t^2}{1-st}x+a-\frac{t}{s}x\Big)
\]
or
\[
\Big(a,\frac{-t}{(1-st)s}x\Big)
=
\Big(a,\frac{-t}{(1-st)s}x+a\Big)
\]
Call $b=\frac{-t}{(1-st)s}x$ (notice that
 $(x,y)\mapsto(a,b)$ is bijective) and get
$
(a,b)
=
(a,b+a)
$.
\end{proof}
Inductively $(a,b)=
(a,b+na)\ \forall n\in \N$; if $a$ generates $X$ additively
then
\[
(a,b)
=
(a,0)=1 \ \forall b\in X
\]

\begin{coro}\label{corobialexp}
If $p$ is an odd prime,
 $X=\F_p$, and $s^{-1}\neq t\in \F_p\setminus\{0\}$, 
then every cocycle in the Alexander's birack in
 $X$ is cohomologous to the trivial one. In other words, the reduced Universal group $\Ur$ is trivial.
 In particular
every 2-cocycle in $D_3$ is trivial.\end{coro}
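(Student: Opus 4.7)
The plan is to deduce this corollary directly from the Lemma just proved, since that Lemma already gives the crucial identities $(x,0)=1$ and $(a,b)=(a,b+a)$ in $\Ur$ whenever $(1-st)$ is invertible. The first step is simply to verify the hypothesis: in $X=\F_p$, the endomorphism $(1-st)$ is a nonzero element of $\F_p$ because $s^{-1}\ne t$ means $st\ne 1$, and every nonzero element of the field $\F_p$ is invertible. So the Lemma applies.

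Next I would iterate the second identity: from $(a,b)=(a,b+a)$ one obtains by induction $(a,b)=(a,b+na)$ for every $n\in\N$. The key observation is then the arithmetic of $\F_p$: since $p$ is prime, every nonzero $a\in\F_p$ generates the additive group $\F_p$, so for any $b$ I can choose $n\in\{0,1,\dots,p-1\}$ with $b+na=0$, yielding
\[
(a,b)=(a,b+na)=(a,0)=1 \quad\text{in }\Ur
\]
for all $a\ne 0$ and all $b\in X$. Combined with the identity $(0,y)=1$ in $\Ur$, established in the paragraph immediately preceding the Lemma from $\pi_\gamma(0,y)=1$, this shows that every generator $(x,y)$ of $\Unc$ maps to the identity in $\Ur$. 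Hence $\Ur=1$, and by Theorem \ref{Ured} every 2-cocycle $f\colon X\times X\to G$ is cohomologous to the trivial one.

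For the final assertion about $D_3$, I would identify the dihedral quandle $D_3=(\Z/3\Z,\;x\t y=2y-x)$ with the Alexander biquandle structure corresponding to $s=1$, $t=-1$: indeed
\[
\sigma(x,y)=(y,\,x\t y)=(y,\,2y-x)=(sy,\,tx+(1-st)y)
\]
with $s=1$, $t=-1$. Then $st=-1\ne 1$ in $\F_3$ and $s^{-1}=1\ne -1=t$, so the hypotheses of the general statement are met with $p=3$, and we conclude that every non-commutative 2-cocycle on $D_3$ is cohomologous to the trivial one.

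I do not expect a real obstacle here: the Lemma has done all the work, and the only substantive content is the observation that in a prime field every nonzero element generates the additive group, which lets us kill the second coordinate for any nonzero first coordinate. The case $a=0$ is already handled before the Lemma, and the $D_3$ case is just a matter of recognizing its Alexander presentation.
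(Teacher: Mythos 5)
Your proposal is correct and follows essentially the same route as the paper: the paper derives the corollary directly from the Lemma together with the remark that $(a,b)=(a,b+na)$ for all $n$, so that whenever $a$ generates $X$ additively one gets $(a,b)=(a,0)=1$, which in $\F_p$ holds for every $a\neq 0$ since $st\neq 1$ makes $1-st$ invertible. Your explicit handling of the $a=0$ case via $(0,y)=1$ and the identification of $D_3$ as the Alexander biquandle with $s=1$, $t=-1$ match what the paper leaves implicit.
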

 
 \begin{rem}
 This generalizes the result of Gra\~na in \cite{G} where he proves the quandle case, that is, the case  $s=1$.
\end{rem}
 

A biquandle example that is not a quandle is the following:
\begin{coro}
Let $X=\Z_3$, then Wada's biquandle agree with biAlexander biquandle ($s=-1=t$, $1-st=-1\in\Z_3$),
so $\Ur=1$ and every non commutative 2-cocycle is trivial. In particular, for any coloring
with this biquandle, the corresponding element in
$\Unc$ is trivial.
\end{coro}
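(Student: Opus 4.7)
The plan is to realize Wada's biquandle on $\Z_3$ as a special case of the Alexander biquandle and then invoke Corollary \ref{corobialexp}. Concretely, Wada's solution in additive notation reads $\sigma(x,y)=(-y,x+2y)$, and in $\Z_3$ we have $2=-1$, so $\sigma(x,y)=(-y,x-y)$. Comparing with the Alexander formula $\sigma(x,y)=(sy,tx+(1-st)y)$, we match $s=-1$, $t=1$ (so $st=-1$ and $1-st=2=-1$ in $\Z_3$). Both $s$ and $t$ lie in $\F_3\setminus\{0\}$ and are therefore units; moreover $s^{-1}=-1\neq 1=t$ in $\F_3$. These are exactly the hypotheses of Corollary \ref{corobialexp}, so the reduced universal group satisfies $\Ur=1$.

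Next I would apply the corollary just before Corollary \ref{corobialexp} (the one stating that $\Ur=1$ forces every 2-cocycle to be trivial, i.e.\ cohomologous to the constant cocycle $1$). This immediately yields the conclusion that any non-commutative 2-cocycle on Wada's biquandle $(\Z_3,\sigma)$ is cohomologous to the trivial one, which is precisely the main assertion.

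For the ``in particular'' clause, the point is to connect triviality of cocycles to triviality of the invariant valued in the (still nontrivial) group $\Unc$. Since $\Ur=1$, the universal cocycle $\pi:X\times X\to\Unc$ is itself cohomologous to the constant cocycle $1$: there is a map $\gamma:X\to\Unc$ with $\gamma(x)=\gamma(s(x))$ and $\pi(x,y)=\gamma(x)\gamma(\sigma^{2}(x,y))^{-1}$. Then, given any coloring $\mathcal C$ of a link by $(\Z_3,\sigma)$, Proposition \ref{cohomologous} yields $[\Psi_i(L,\mathcal C,\pi)]=[\Psi_i(L,\mathcal C,1)]=[1]$, so the invariant computed against $\pi$ is the trivial conjugacy class in $\Unc$.

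The only genuinely non-formal step is the first identification, and even there the obstacle is only clerical: one must be careful with the conventions $s=-1$, $t=1$ (so that $st=-1$ and $1-st=-1$ in $\Z_3$) in order to correctly invoke Corollary \ref{corobialexp}. Once this matching is set up, the rest is a direct chain of applications of the universal property of $\Unc$, the construction of $\Ur$, and the cohomology invariance established in Proposition \ref{cohomologous}; no new computation in $\Unc(\mathrm{Wada}(\Z_3))\cong\Z$ is required.
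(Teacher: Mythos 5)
Your proof is correct and matches the paper's intended (unwritten) argument: the corollary is stated without proof as an immediate consequence of identifying Wada on $\Z_3$ with the Alexander biquandle for $s=-1$, $t=1$, applying Corollary \ref{corobialexp} to get $\Ur=1$, and then using the preceding corollary together with Theorem \ref{Ured} and Proposition \ref{cohomologous} for the ``in particular'' clause, exactly as you do. Note that you have also silently corrected what is evidently a typo in the statement: ``$s=-1=t$'' should read $s=-1$, $t=1$ (otherwise $1-st=0$ and the hypothesis $s^{-1}\neq t$ of Corollary \ref{corobialexp} fails), consistent with the value $1-st=-1$ claimed there and with the paper's earlier identification of Wada's solution with the Alexander switch.
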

 
\begin{rem}
For the inverse solution $\ra\sigma$ of
of  Wada's biquandle (with $G=\Z_3$), the group $\Unc(\ra\sigma)=\langle a:a^3=1\rangle$ is not trivial. One may wander if there is a function
$\gamma$ such that $\Ur(\ra\sigma)$ is the trivial group, but this is not the case. We will see examples
where the invariant obtained using Wada's inverse solution is not trivial, actually, it distinguishes
the trefoil from its mirror image.
\end{rem}

\begin{rem}
In the previous corollary, the hypothesis $|X|$ being prime was essential, the smallest case where it fails is $X=\Z_4$, as an example of computation
 we calculate the  reduced universal group for $X=\Z_4$ and for $Z_8$
with  $s=-1$ and $t=1$.
\end{rem}
 
 \subsection{Reduced $\Unc$ in computer}
 
 It is clear that the procedure that computes $\Unc$ in the computer can be
 trivially adapted for the reduced version, just adding as input
 a given set $S_0\subset X\times X$, and begin with
 $S=S_0\cup \{(x,sx):x\in S\}$, instead of simply
 $S= \{(x,sx):x\in S\}$. The procedure will actually compute a list of generators
 and relations of the quotient
  group $\Unc/(S_0)$, so it could be also used to produce other quotients,
  not only $\Ur$. The advantage of $\Ur$ is that it gives the same
  knot/link invariant as $\Unc$, so in order to find suitable $S_0$'s one can
  do the following:
  
  \begin{itemize}
  \item In order to produce functions $\gamma:X\to \Unc$ with 
  $\gamma(x)=\gamma(sx)$, consider the equivalence relation on $X$ induced by $s$, that is
the equivalence relation generated by $x\sim s(x)$.
  Denote $\ra x$ the class of $x$ modulo $s$.
  \item for all pairs $(x,y)\in X\times X$, consider the 
   coboundary relation
\[
f_\gamma(x,y)=\gamma(x)f(x,y)\gamma(\sigma^2(x,y))^{-1}
\]
  if $\ra x=
\ra{\sigma^2(x,y)}$ then $f(x,y)$ is conjugated to
 $f_{\gamma}(x,y)$,
so $f(x,y)=1\iff f_{\gamma}(x,y)=1$, so it is clear that $(x,y)$ can
not be included in $S$ because of $\gamma$.
  \item if $\ra x\neq 
\ra{\sigma^2(x,y)}$ then we can choose $\gamma:X\to \Unc$ such that
$\gamma(z)=\gamma(sz)$ for all $z$ and
$\gamma(x)f(x,y)=\gamma(\sigma^2(x,y))$.
  \end{itemize}
 By the above considerations, it is useful to list 
 all tuples
 \[
( \ra x,(x,y),\ra{\sigma^2(x,y)})
\]
with $   \ra x\neq \ra{\sigma^2(x,y)}$. For any of these elements,
add the pair 
$( \ra x,\ra{\sigma^2(x,y)})$ to a set f ``used'' elements, so we continue
with the others with
$( \ra x,(x,y),\ra{\sigma^2(x,y)})$ with
 $   \ra x\neq \ra{\sigma^2(x,y)}$ but
 $ (  \ra x, \ra{\sigma^2(x,y)})$ not ``used''.
  This procedure  is
   easily implemented in G.A.P. For example, for the Dihedral quandle gives
 \[
  [0, [0,1 ],2 ],\  [ 0, [0, 2 ],1 ],\  [1, [1, 0 ],2 ] ]
\]
 so we can choose $\gamma(0)=1$, 
 $\gamma(2)= [0,1 ]$, $\gamma(1)=[0, 2 ]$ and hence
 define $S_0:=\{
  [0,1 ], [0, 2 ]\}$. With this entry, the procedure computing $\Ur$
  gives $S=X\times X$, that is $\Ur(D_3)$ is trivial,
in agreement with Corollary \ref{corobialexp}.
  We give the list of generators and relations
  of $\Ur$ for biquandles of cardinality 3, with the corresponding $S_0$.
  
  \[
\begin{array}{|c|c|c|c|
c|}
\hline
name&\sigma& generators&equations&\\
&&of\ \Ur && S_0\\
\hline flip &BQ^3_1&6&f_{2}f_{1}=f_{1}f_{2}, f_{4}f_{3}=f_{3}f_{4},&- \\
                    &&  &   f_{6}f_{5}=f_{5}f_{6},& \\
\hline a\hbox{-}flip\{2,3\}\cup\{1\} &BQ^3_2&3&f_{3}f_{2}=f_{2}f_{3},&-\\
\hline &BQ^3_3&3&-&-\\
\hline &BQ^{3*}_3&3&-&- \\
\hline Wada(\Z_3)&BQ^3_4&0&- & \{[1,2]\}
 \\
\hline inv.\ Wada(\Z_3)&BQ^{3*}_4&1&f_{1}^3=1 &-  \\
\hline &BQ^3_5&2&- &\{[1,3]\}\\
\hline Q_3&BQ^3_6&2&-&\{[2,1]\}\\
\hline inverse\ Q_3&BQ^{3*}_6&3&-& -\\
\hline (x,y)\mapsto(\hbox{-}y,\hbox{-}x) &BQ^3_7&3&f_{3}f_{2}=f_{2}f_{3}, & -\\
\hline D_3&BQ^3_8&0&-& \{[1,2],[1,3]\} \\
\hline inverse\  D_3&BQ^{3*}_8&0&-&-  \\
\hline &BQ^3_9&1&f_{1}^3=1, &-\\
\hline &BQ^{3*}_9&0&- & -\\
\hline involutive (\Z_3)& BQ^3_{10}&2&f_{1}f_{2}=f_{2}f_{1}&  -\\
\hline
\end{array}
\]

We exhibit another example of computation 
using  this algorithm: 
  
 \subsection{Reduced group for Alexander biquandle in $\Z_4$}
 
 In order to choose a possible $\gamma$, we list
as above elements of the form
 $(   \ra x,(x,y),\ra{\sigma^2(x,y)})$
 with  $(   \ra x,(x,y)\neq \ra{\sigma^2(x,y)})$.
 without repeating  $( \ra x,(x,y),\ra{\sigma^2(x,y)})$, this gives only one 
 element
 \[
   [ 0, [0, 1 ],2 ] ]
\]
so we compute $\Ur$ with $S_0=\{(0,1)\}$. The G.A.P. answer is

4 generators: $\{
f_{1}$=[2,1]=[4,1], $f_{2}$=[2,2]=[4,4], $f_{3}$=[2,3]=[4,3], $f_{4}$=[3,2]=[3,4]$\}$, the trivial elements are
\[
1=[1,1]=[1,2]=[1,3]=[1,4]=[2,4]=[3,1]=[3,3]=[4,2]
\]
and  relations
\[
f_{1}=f_{2}f_{3},\ f_{3}=f_{2}f_{1}, \
f_{1}f_{2}=f_{3},\
f_{3}f_{2}=f_{1}, 
\]
\[
 f_{2}f_{1}=f_{1}f_{2},\ f_{2}f_{3}=f_{3}f_{2},\ f_{3}f_{1}=f_{1}f_{3}
\]
Notice that $f_4$ do not appear in the list of relations.
Calling $a=f_2$, $b=f_3$, we get

\[
f_{1}=ab, \ b=af_{1},
 f_{1}a=b, \
  ba=f_{1}\]
  \[ 
  bf_{1}=f_{1}b, \
  af_{1}=f_{1}a, \ ab=ba\]
Replacing $f_1$ by $ab$ we get
\[
 b=aab,
 aba=b, \
  ba=ab\]
  \[ 
  bab=abb, \
  aab=aba,\]
whose solution is
\[
a^2=1, \ ab=ba
\]
so $\Ur=Free(a,b,f_4)/(a^2=1,ab=ba)\cong (\Z/2\Z\oplus\Z)*\Z$.

\subsection{Reduced Universal group of 4-cycles in $S_4$}
Another example of application of $\Ur$ is the following:
consider the quandle 
\[
Q=\{(1,2,3,4),(1,2,4,3),(1,3,2,4),(1,3,4,2),(1,4,2,3),(1,4,3,2)\}
\]
 that is, 4-cycles in $S_4$, with quandle operation
$x\t y=y^{-1}xy$. Recall that $f:Q\times Q\to \Unc(Q)$ 
is cohomologous to $f_\gamma$
if there exists a function $\gamma:Q\to \Unc$ such that
\[
f_\gamma(x,y)=\gamma(x)f(x,y)\gamma(x\t y)^{-1}
\]
If we list
$( x,(x,y),\sigma^2(x,y))$ without repeating ``used'' pairs $(x,x\t y)$, we get
\[
[ 1, [ 1, 2 ], 4 ], [ 1, [ 1, 3 ], 6 ], [ 1, [ 1, 4 ], 3 ], 
  [ 1, [ 1, 6 ], 2 ], [ 2, [ 2, 1 ], 6 ], [ 2, [ 2, 5 ], 4 ], 
 \]
 \[
  [ 2, [ 2, 6 ], 5 ], [ 3, [ 3, 1 ], 4 ], [ 3, [ 3, 4 ], 5 ], 
  [ 3, [ 3, 5 ], 6 ], [ 4, [ 4, 2 ], 5 ], [ 5, [ 5, 2 ], 6 ] 
  \]
If we define
$\gamma(1)=1$, $\gamma(4)=[1,2]$, $\gamma(6)=[1,3]$,
$\gamma(3)=[1,4]$,
$\gamma(2)=[1,6]$,
$\gamma(5)=\gamma(2)[2,6]$
then $S_0=
\{[1,2],[1,3],[1,4],[1,6],[2,6]\}$. If we compute $\Unc$ using our algorithm, it gives 30 generators with with 108 equations, while
$\Ur$ has only 5 generators with 20 equations
\[
1=f_{1}f_{3},
 f_{2}f_{4}=1,
 f_{3}f_{1}=1,
   f_{5}f_{1}=1,
    f_{1}f_{5}=1,
     \]
     \[
     f_{1}f_{1}=f_{2}, f_{1}f_{1}=f_{4},
      f_{1}f_{1}=f_{3}f_{5},
  f_{5}=f_{1}f_{4}, f_{1}=f_{2}f_{5}, 
  \]
\[  f_{1}=f_{3}f_{4},
  f_{2}f_{1}=f_{3}, f_{4}f_{3}=f_{1},
  f_{4}f_{1}=f_{5}, f_{5}f_{2}=f_{1},
  \]
  \[
  f_{1}f_{2}=f_{3},
   f_{1}f_{2}=f_{2}f_{1}, f_{1}f_{3}=f_{3}f_{1}, f_{1}f_{4}=f_{4}f_{1},
    f_{1}f_{5}=f_{5}f_{1},
\]
Call $a:=f_1$, then $f_3=f_5=a^{-1}$,
$f_2=f_4=a^2$, and replacing these values into the
20 equations, the only remaining condition
is $ a^4=1$, we conclude $\Ur=\langle a: a^4=1\rangle$

This quandle is interesting because it distinguish (using $\Ur$ and its canonical cocycle) the trefoil from its mirror image: there are 30 colorings,
6 of them give trivial invariant both for the trefoil and its mirror (these are the 6 constant colorings),
but the other 24 colorings gives $a^{-1}$ for the trefoil and $a$ for its mirror, and clearly $a\neq a^{-1}$ in $\langle a: a^4=1\rangle$.

\section{Some  knots/links and their n.c. invariants}

There are 3 quandles of size 3, {\em none of them} give nontrivial invariant
for knots up to 11 crossings. On the other hand,
using the biquandle $BQ^3_2$=aflip$\coprod\{1\}$,
from the list of 84 knots with less or equal to 10 crossings,
all of them  have exactly 3 different colorings, but there are 44
with nontrivial invariant.
For instance, figure eight has nontrivial invariant for tree biquandles
of size 3:
 $BQ^3_2$=aflip$\coprod\{1\}$, 
 $BQ^3_7$: $\sigma(x,y)$=$(-y,-x)$, and $BQ^3_9$.

We illustrate in next table the number of colorings
(denoted by $c$) and nontrivial invariants,
for knots up to 6 crossings
 and biquandles from Bartholomew and Fenn's list:

\[
\begin{array}{c|cc|cc|cc|cc|cc|cc|cc}
 &3_1&&4_1&&5_1&&5_2&&6_1&&6_2&&6_3\\
& c&& c&& c&& c&& c&& c&& c&\\
\hline
BQ^3_1  & 3  && 3 & & 3 & & 3 & & 3 & & 3  && 3  & \\
 BQ^3_2  & 3  && 3  & f_3, f_3& 3  & f_3^{-1}, f_3^{-1}& 3 & & 3 & & 3 & & 3  & f_3, f_3, \\
\hline
 BQ^3_3  & 3  && 3  && 3  && 3  && 3  && 3  && 3  & \\
 BQ^3_4  & 9  &&  1  && 1  && 3  && 9  && 3  && 1  & \\
 \hline
 BQ^3_5  & 3  && 3  && 3  && 3  && 3  && 3  && 3  & \\
 BQ^3_6  & 3  && 3  && 3  && 3  && 3  && 3  && 3  & \\
\hline
 BQ^3_7  & 3  && 3  & f_3, f_3& 3  & f_3^{-1}, f_3^{-1}& 3 & & 3 & & 3 & & 3  & f_3, f_3 \\
 BQ^3_8  & 9 & &  3 & & 3  && 3  && 9 & &3  && 3  & \\
\hline
 BQ^3_9  & 9  && 3  & f_1, f_1, f_1& 3  & f_1^{-1}, f_1^{-1}, f_1^{-1}& 3  && 
9  &&  3  && 3  & f_1, f_1, f_1 \\
 BQ^3_{10}  & 3  && 0  && 0  && 3  && 3  && 3  && 0  & \\
\end{array}
 \]
 We see that using only number of colorings we can separate this list of knots in 3 groups: $\{3_1,6_1\}$, $\{4_1,5_1,6_3\}$
and $\{5_2,6_2\}$, and  using the invariant 
we can also distinguish $5_1$ from all others. One interesting remark is that, using only quandles, there are always the trivial constant colorings,
but using biquandles it may happens that a knot admit no coloring at all, as
we see with biquandle $BQ^3_{10}$.

\subsection{Wada inverse of $\Z_ 3$}

Recall for $\ra\sigma$ the inverse solution
of Wada's for  $G=\Z_3$,
Unc=$\langle a : a^3=1\rangle$, the knot $7_4$  has 
nontrivial invariant under this biquandle solution
(trivial invariant for one coloring, $a$ and $a^2$ for the other two).
 This example
shows the importance of considering $\ra \sigma$
different from $\sigma$, since we have this example where
for $\sigma$=
Wada's solution for $\Z_3$,
every cocycle is coboundary and hence no invariant will appear,
but for the inverse solution $\ra\sigma$ we get nontrivial things.

\subsection{Alexander biquandle on $\Z_4$ and $\Z_8$}

The Borromean link has trivial linking number, but has only 3
colorings using $D_3$, so we distinguish from three 
separated unknots. The Unc invariant are trivial for all biquandles of size 3.

On the other hand, for the biAlexander
 biquandle on $\Z_4$ with $s=-1$ and $t=1$,
even though there are 64 colorings, they give non trivial invariants:

 Recall
Unc=$Free(a,b,f_4)/(b^2=1,ab=ba)$, the invariant for the Borromean link
is trivial in 40 colorings, but gives twice
$(\alpha,\alpha,1)$, $(\alpha,1,\alpha)$, $(1,\alpha,\alpha)$, $(1,\alpha,\alpha^{-1})$,
 $(\alpha,1, \alpha^{-1})$,
$(\alpha,\alpha^{-1},1)$ on the others, with $\alpha=a$ and $\alpha=a^ {-1}$.,

\

In a similar way, Whitehead's link has trivial linking number,
give trivial invariant for all biquandles of size 3 (even though
non-trivial number of colorings), with bialexander on $\Z_4$
also give trivial invariant, but with 
with biAlexander on $\Z_8$ one has non trivial invariants.
First we compute $\Ur$ for $\Z_8$, $t=1$, $s=-1$, with subset
$S_0=\{[1,2],[1,3],[2,2]\}$ (it may be seen that this is a subset corresponding
to a  convenient $\gamma$). The algorithm gives as answer that
$\Ur$ has 4 generators:
\[f_{1}=(2,1)=(2,7)=(4,1)=(4,7)=(6,1)=(6,3)=(8,1)=(8,3),
\]
\[ f_{2}=(2,3)=(2,5)=(4,3)=(4,5)=(6,5)=(6,7)=(8,5)=(8,7),
\]
\[ f_{3}=(2,4)=(2,6)=(4,2)=(4,4)=(6,6)=(6,8)=(8,4)=(8,6),
\]
\[ f_{4}=(3,2)=(3,4)=(3,6)=(3,8)=(7,2)=(7,4)=(7,6)=(7,8), 
\]
Trivial elements are
\[
1=[ 1, 1 ]=[ 1, 2 ]=[ 1, 3 ]=[ 1, 4 ]=[ 1, 5 ]=[ 1, 6 ]
=[ 1, 7 ]=[ 1, 8 ]=[ 2, 2 ]=[ 2, 8 ]=[ 3, 1 ]
\]
\[
=[ 3, 3 ]
=[ 3, 5 ]=[ 3, 7 ]=[ 4, 6 ]=[ 4, 8 ]=[ 5, 1 ]=[ 5, 2 ]
=[ 5, 3 ]=[ 5, 4 ]=[ 5, 5 ]=[ 5, 6 ]
\]
\[
=[ 5, 7 ]=[ 5, 8 ]
=[ 6, 2 ]=[ 6, 4 ]=[ 7, 1 ]=[ 7, 3 ]=[ 7, 5 ]=[ 7, 7 ]=[ 8, 2 ]=[ 8, 8 ]
\]
with relations 
\[
f_{1}f_{3}=f_{2},\  f_{3}f_{1}=f_{2},\    f_{3}f_{2}=f_{1},\ f_{3}f_{3}=1,
\]
\[
f_{2}f_{1}=f_{1}f_{2},\  f_{2}f_{2}=f_{1}f_{1}, \ f_{2}f_{3}=f_{1},\
 f_{3}f_{1}=f_{1}f_{3}, \ f_{3}f_{2}=f_{2}f_{3}
\]
Calling $a:=f_1$, $b:=f_3$, we get
\[
ab=f_{2}, f_{2}a=af_{2}, f_{2}f_{2}=aa, f_{2}b=a, ba=f_{2}, ba=ab, bf_{2}=a, bf_{2}=f_{2}b, bb=1\]
so $b^2=1$, and $f_2=ab=ba$. we conclude $\Ur=Free(a,b,f_4)/(b^2=1, ab=ba)$.

If we use this biquandle with Whitehead's link we get 64 colorings,
32 of them give trivial invariant, 16 colorings give $(b,1)$ and 16 colorings give as invariant $(1,b)$.

\section{Final comments}

In the examples we saw, very often the group $\Ur$ is non commutative, but 
we haven't found a knot/link with genuine non commutative invariant, that is, for
example a commutator of two non commuting elements of $\Ur$. Also, sometimes
$\Ur$ have pairs of commuting elements and other non commuting, for instance, $\Ur(biAlex(Z_8))=Free(a,b,f_4)/(b^ 2=1,ab=ba)$, but using this
biquandle, computing the invariants for knots and links with less than 11 crossings,
the elements $a$ and $b$ do not ``mix" with $f_4$. We don't know if this is a
 general fact or not, that is, if the invariant obtained is the same if 
we use the abelianization of $\Ur$.

If $\Ur$ happens to be abelian, then the information we get with the non commutative invariant
is essentially 
the state-sum invariant for the canonical cocycle $\pi_\gamma:X\times X\to \Ur$.
If this is the case (or if one consider the abelianization of $\Ur$), then our construction can be seen as a natural and nontrivial way to produce interesting 2-cocycles, so that sate-sum invariant becomes a procedure
with input only a biquandle, and not a biquandle {\em plus} a 2-cocycle, because a natural 2-cocycle is always present when one gives a biquandle.

Another natural question about state-sum invariant for biquandles
 is how to generalize it for 
 2-cocycles with values in nontrivial coefficients, which is known for quandles, but unknown for biquandles. In order
to answer this question, it should be
convenient to have an action of some group (to be defined) 
into the abelian group of coefficients where the 2-cocycle takes values, and if one imitates
the quandle case, one should define, for each crossing, an exponent (in this group)
that  twist the value of the cocycle at that crossing.
 If the exponent is well-define, that is, for instance it remains unchanged under 
 Reidemeister moves of other crossings,
then essentially it must be a non commutative 2-cocycle. The group
$\Un$ was the candidate, and in fact this was origin of the present work.
In the quandle case there is a natural map
$\Un(X)\to G_X$, where $G_X$ is the group generated by $X$ with
relations $xy=zt$ if $\sigma(x,y)=(z,t)$; the map
$\Un(X)\to G_X$ is simply determined by  $(x_1,x_2)\mapsto x_2$.
So, for quandles,  $G_X$-modules
are natural candidates for coefficients
(see \cite{CEGS}), or also $\Un(X)$-modules, or quandle-modules
as considered in \cite{AG}. We hope 2-cocycles with values in
$\Un$-modules will allow to define more general state-sum invariants,
 but at the moment we don't know how, we end remarking that 
for biquandles, there is no general well-defined map
$\Un(X)\to G_X$, and $\Un(X)$ sometimes is the trivial group.

In \cite{FG} one can found the GAP programs computing colorings, $\Ur$, and invariants
for knots and links given as planar diagrams.

\end{document}